\documentclass[10pt,leqno]{amsart}
\usepackage{graphicx}
\usepackage{indentfirst,csquotes}

\usepackage{amssymb,amsthm,amsmath,amsfonts,ascmac,mathrsfs,mathtools}
\usepackage{paralist,hyperref,fancyhdr,etoolbox}
\usepackage{tikz}
\usetikzlibrary{cd}

\theoremstyle{plain}
\newtheorem{theorem}{Theorem}[section]

\newtheorem{proposition}[theorem]{Proposition}

\theoremstyle{remark}
\newtheorem{remark}[theorem]{Remark}
\theoremstyle{definition}
\newtheorem{definition}[theorem]{Definition}

\newcommand{\Z}{\mathbb{Z}}
\newcommand{\mr}{\mathrm}
\newcommand{\ra}{\rightarrow}

\hypersetup{
    colorlinks = true,
    linkcolor = [rgb]{0.3,0.55,0.25},
    citecolor = [rgb]{0.164,0.39,0.77},
    urlcolor = [rgb]{0.7,0.2,0.4}
}

\begin{document}
    \title[String Cobracket and Lie Bialgebra Structure]{Homotopy Non-Invariance of the String Cobracket and the Failure of the Lie Bialgebra Structure}
    \author{Isana SUMOTO}
    \address{Graduate School of Mathematical Sciences, The University of Tokyo, 3-8-1 Komaba, Meguro-ku, Tokyo, 153-8914, Japan}
    \email{isana(at)ms.u-tokyo.ac.jp}

    \begin{abstract}
        We prove that the string cobracket is \emph{not} a homotopy invariant. Adapting Naef's method \cite{Naef24} for computing the string coproduct, we show that the string cobrackets on the three-dimensional lens spaces $L(9;1)$ and $L(9;4)$ differ. We further relate the string cobracket to the Whitehead torsion, analogously to the case of the string coproduct. In addition, we show that the string bracket and the string cobracket do \emph{not} endow the $S^1$-equivariant homology of the free loop space with a Lie bialgebra structure. These findings indicate that the analogy with the Turaev cobracket breaks down in a higher-dimensional string topology.
    \end{abstract} 
    \maketitle
    \begingroup
        \let\thefootnote\relax
        \footnotetext{\emph{2020 Mathematics Subject Classification}. Primary 55P50; Secondary 57Q10, 57K31.\\
        \indent \emph{Key words and phrases}. String topology, Lens spaces, Lie bialgebra.} 
    \endgroup
    \section{Introduction}
    String topology, introduced in \cite{ChasSullivan}, investigates algebraic structures on the homology of free loop spaces and was developed as a higher‑dimensional analogue of Goldman's work on closed curves on surfaces \cite{Goldman}. Among its fundamental operations are the string product and the string coproduct\footnote{While the string product is referred to as the ``loop product'' in \cite{ChasSullivan}, the terms ``loop coproduct'' and ``string coproduct'' often denote different operations. In this paper, following the convention in \cite{Naef24}, we refer to the product defined by Chas and Sullivan as the string product, and the coproduct investigated by Goresky and Hingston as the string coproduct.}. They induce operations on the $S^1$-equivariant homology of the free loop space, called the string bracket and the string cobracket. The string bracket generalizes the Goldman bracket, and the string cobracket generalizes the Turaev cobracket with rational coefficients \cite{HartensteinStegemeyer}.\\
    The string product is known to be a \emph{homotopy invariant} \cite{Cohen08,Crabb08,GruherSalvatore08} (or could be deduced from \cite{CohenJones}). The string coproduct is shown to be a simple-homotopy invariant in \cite{NaefSafronov}. Moreover, for a simply connected manifold, it is a homotopy invariant over $\mathbb{Q}$ \cite{RiveraWang}. However, Naef \cite{Naef24} computed the string coproduct on the three-dimensional lens spaces $L(7;1)$ and $L(7;2)$---which are homotopy equivalent but not homeomorphic---and obtained different values. Thus, the string coproduct is \emph{not} a homotopy invariant in general. Moreover, the string coproduct distinguishes non-homeomorphic three-dimensional lens spaces \cite{NaefManuelNathalie}.\\
    This raises the question of whether the \emph{string cobracket} behaves analogously to the Turaev cobracket. In this paper, we focus on the \emph{string cobracket} of $L(9;1)$ and $L(9;4)$, a pair of manifolds that are homotopy-equivalent but non-homeomorphic and admit explicit computations. The choice of $9$ (rather than $7$) is essential: the nontrivial divisor $3$ plays a crucial role in our spectral sequence and torsion calculations.\\
    Furthermore, we investigate the algebraic structure of the homology of free loop spaces. Turaev \cite{Turaev} proved that the Goldman bracket and the Turaev cobracket form a Lie bialgebra. Similarly, Chas and Sullivan \cite{Chas2004} discussed that the string bracket and the string cobracket on the rational homology of the free loop space of smooth, closed, oriented, and simply connected manifolds form a Lie bialgebra. It is natural to ask whether a similar statement holds for higher-dimensional manifolds with integer coefficients.\\
    The main contributions of this paper are as follows:
    \begin{enumerate}
        \item Like the string coproduct, the string cobracket is also \emph{not} a homotopy invariant. More precisely, we prove that the number of components on which the string cobracket is nonzero differs between $L(9;1)$ and $L(9;4)$.
        \item The string cobracket can be expressed in terms of Whitehead torsion, extending the relationship previously observed for the string coproduct by Naef \cite{Naef24}.
        \item The pair consisting of the string bracket and the string cobracket does \emph{not} form a Lie bialgebra. This contrasts with the classical surface case, where the Goldman bracket and the Turaev cobracket form a Lie bialgebra \cite{Turaev}, as well as with the rational string topology of simply connected manifolds \cite{Chas2004}.
    \end{enumerate}
    These results reveal that the string cobracket is not merely a higher-dimensional analogue of the Turaev cobracket or the rational string cobracket; rather, it exhibits a richer and more subtle algebraic structure.\\
    The paper is organized as follows. In Section \ref{section2}, we review the necessary background on string topology and the results by Naef \cite{Naef24}. In Section \ref{section3}, we compute the string cobracket on $L(9;1)$ and $L(9;4)$ using the Leray--Serre spectral sequence, and demonstrate that the string cobracket is not a homotopy invariant. Section \ref{section4} is devoted to discussing the relationship between our results and Whitehead torsion. Finally, in Section \ref{section5}, we apply the computations from Section \ref{section3} to prove that the string bracket and the string cobracket do not form a Lie bialgebra.
    
    \section*{Acknowledgments}
    I would like to express my gratitude to my supervisor, Takuya Sakasai, for his invaluable advice and tremendous support. I also thank Katsuhiko Kuribayashi, Takahito Naito, and Toyo Taniguchi for carefully reading the manuscript, and providing constructive suggestions and insightful comments.
    \section{Preparation}\label{section2}
    \subsection{Definitions}

    Throughout this paper, all homology and cohomology are taken with $\Z$-coefficients. Let $M$ be an oriented compact $d$-dimensional manifold, and let $LM$ be its free loop space $\mathrm{Map}(S^1,M)$. Throughout this paper, we adopt the Sobolev $H^1$-loop space as our model for $LM$ (see \cite{HingstonWahl}); consequently, $LM$ is regarded as a Hilbert manifold.\\
    The \emph{string product}
    \begin{align*}
        \wedge\colon H_*(LM\times LM)\ra H_{*-d}(LM)
    \end{align*}
    is an operation that equips the ($d$-shifted) homology of $LM$ with a ring structure. Dually, the \emph{string coproduct}
    \begin{align*}
        \vee\colon H_*(LM,M)\ra H_{*+1-d}(LM\times LM,LM\times M\cup M\times LM)
    \end{align*}
    is defined in \cite{Sullivan_2004} (also see \cite{GoreskyHingston}). Here, the subspace $M\subset LM$ is obtained by identifying each point $x\in M$ with the constant loop at $x$ via the natural inclusion $M\hookrightarrow LM$. The string bracket (resp. string cobracket) is defined by a combination of string product (resp. string coproduct) with the BV-operator introduced below.\\
    First, we see the property of the string coproduct following Proposition 3.7 in \cite{HingstonWahl}. Let $N$ be an oriented compact $k$-dimensional manifold, and let $\alpha\colon N\ra LM$ be a smooth map. Assume transversality in the following sense:
    \begin{enumerate}
        \item For all $n\in N$, $\partial\alpha(n)(t)/\partial t$ is nonzero at $t=0$.
        \item The map $\bar{\alpha}\colon N\times (0,1)\ra M\times M$ given by $(n,t)\mapsto (\alpha(n)(0),\alpha(n)(t))$, $\bar{\alpha}$ intersects $V$ the diagonal transversely in a compact submanifold of $N\times (0,1)$. Here, 
        \begin{align*}
            V\coloneqq \{(n,t)\in N\times S^1 \mid \alpha(n)(t)=\alpha(n)(0),t\neq 0\}
        \end{align*}
        is the self-intersection locus of $\alpha$.
    \end{enumerate}
    Let $x$ be a homology class given by $\alpha_*[N]\in H_*(LM,M)$. The map $\alpha$ induces
    \begin{align*}
        \vee(\bar{\alpha})\colon V\ra& LM\times LM\\
        (n,t)\mapsto& \left(s\mapsto \alpha(n)(st),s\mapsto \alpha(n)((1-t)s+t)\right).
    \end{align*}
    The string coproduct $\vee x$ coincides with
    \begin{align*}
        \vee x= \vee(\alpha_*[N])=(\vee\bar{\alpha})_*([V])\in H_{k+1-d}(LM\times LM,LM\times M\cup M\times LM).
    \end{align*}
    Second, we define the BV-operator, which plays a central role in the homology of free loop spaces.
    \begin{definition}[BV-operator]
        The free loop space $LM$ carries an $S^1$-action $\rho:S^1\times LM\ra LM;~(\theta,\gamma)\mapsto (t\mapsto \gamma(t+\theta))$. The \emph{Batalin--Vilkovisky(BV) operator} is the map
        \begin{align*}
            H_*(LM)&\ra H_{*+1}(LM)\\
            \alpha &\mapsto \rho_*([S^1]\otimes\alpha).
       \end{align*}
    \end{definition}
    For a space $X$ with an $S^1$-action, we denote its $S^1$-equivariant homology by $H_*^{S^1}(X)=H_*(ES^1\times_{S^1}X)$, where $ES^1 \times_{S^1} X$ is the Borel construction.\\
    Finally, following \cite{NaefWillwacher}, we will define the string cobracket on the $S^1$-equivariant homology of $LM$ as follows. Since we work with $\Z$-coefficients, torsion terms appear. Also, one must be careful about whether the string cobracket can be defined in the same manner as with $\mathbb{Q}$-coefficients. However, a calculation analogous to that in the case of the string bracket confirms that this is not an issue.
    \begin{definition}[string cobracket]
        The string cobracket $\vee_{S^1}$ is defined by the composition  
        \begin{align*}
            \vee_{S^1}:&H_*^{S^1}(LM,M) \xrightarrow{\tau_*} H_{*+1}(LM,M)\\
            &\xrightarrow[\substack{\text{string}\\ \text{coproduct}}]{\vee} H_{*+2-d}(LM\times LM,LM\times M\cup M\times LM) \\
            &\xrightarrow{\text{Künneth}}\left(H_*(LM,M)\otimes H_*(LM,M)\right)[2-d]\oplus \mr{Tor}\left(H_*(LM,M), H_*(LM,M)\right)[1-d]\\
            &\xrightarrow{(\pi_*\otimes\pi_*)\oplus \mr{Tor}(\pi_*,\pi_*)} \left(H_*^{S^1}(LM,M)\otimes H_*^{S^1}(LM,M)\right)[2-d]\\
            &\hspace{83pt}\oplus \mr{Tor}\left(H_*^{S^1}(LM,M), H_*^{S^1}(LM,M)\right)[1-d].
        \end{align*}
        Here, $\tau$ is the relative $S^1$-equivariant transfer map. The natural inclusion $LM\hookrightarrow ES^1\times LM$ is a homotopy equivalence and since the diagram
        \[
            \begin{tikzcd}
                ES^1\times LM \arrow[d,"\simeq"]\arrow[rd,"\pi"]\\
                LM\arrow[r,"i"] &ES^1\times_{S^1} LM
            \end{tikzcd}
        \]
        commutes, we identify $i$ with $\pi$ by abuse of notation.
      \end{definition}
      In section \ref{section3}, we study the maps $\tau_*$ and $\pi_*$ in more detail. The string bracket $\wedge_{S^1}$ is defined analogously.
      
      \subsection{Application of the results by Naef}
      In the following, let $M$ be the lens space $L(9;k)$ with $k=1,4$. The lens spaces $L(9;1)$ and $L(9;4)$ are homotopy equivalent but not homeomorphic. Since $\pi_1(M)\cong \Z/9\Z$, the free loop space $LM$ decomposes into $9$ path-connected components, which we denote by $L_l M$ indexed by $l \in \Z/9\Z$.\\
      Following Naef \cite{Naef24}, the homology of $L_lM$ can be computed as follows:
    \begin{align*}
        H_0(L_lM)\cong \Z,\quad H_1(L_lM)\cong \Z/9\Z,\quad H_2(L_lM)\cong \Z,\\
        H_3(L_lM)\cong \Z\oplus(\Z/9\Z),\quad     H_4(L_lM)\cong \Z,~\dots.
    \end{align*}
    \begin{remark}
        In \cite{Naef24}, Naef computed the homology of the free loop space of the lens spaces $L(7;1)$ and $L(7;2)$. Since the argument does not rely on the fact that $7$ is prime, the same method applies to $L(9;k)$ without modification. On the other hand, in the latter part of our discussion, the fact that $9$ is not prime plays an essential role.
    \end{remark}
    \begin{proposition}\label{rho_0}
        Let $x_l \in H_1(L_lM)$ denote the positive generator. Then, the equality $\rho_*([S^1]\otimes 1)=l\cdot x_l$ holds.
    \end{proposition}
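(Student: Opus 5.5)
Here $1\in H_0(L_lM)\cong\Z$ denotes the class of a point, i.e.\ of a single loop $\gamma\in L_lM$. Then $\rho_*([S^1]\otimes 1)$ is represented by the cycle $S^1\to L_lM$, $\theta\mapsto \gamma(\cdot+\theta)$. So the statement amounts to identifying this circle of rotated loops in $H_1(L_lM)\cong\Z/9\Z$, where $x_l$ is the positive generator. The plan is to compute via the Hurewicz map: $H_1(L_lM)$ is the abelianization of $\pi_1(L_lM,\gamma)$. I will identify the loop $\theta\mapsto\gamma(\cdot+\theta)$ as an element of that fundamental group and read off its image.

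\textbf{Step 1 (fundamental group of the component).} Use the evaluation fibration $\Omega M\to L_lM\xrightarrow{ev_0}M$, which has a section up to homotopy and, since $\pi_1(M)$ is abelian, gives
\begin{align*}
\pi_1(L_lM,\gamma)\cong \pi_2(M)\rtimes Z(g^l)\cong \pi_1(M)\cong\Z/9\Z,
\end{align*}
because $\pi_2(M)=0$ (the universal cover is $S^3$). Concretely, the map $ev_{0*}\colon \pi_1(L_lM)\to\pi_1(M)$ is an isomorphism onto the centralizer of $g^l$, which is all of $\Z/9\Z$. This is consistent with $H_1(L_lM)\cong\Z/9\Z$, and it shows that Hurewicz followed by $ev_{0*}$ identifies $H_1(L_lM)\cong H_1(M)$. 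I will take the positive generator $x_l$ to be the class mapping to the generator $g$ of $\pi_1(M)$ corresponding to the positive generator of $H_1(M)$. This normalization is presumably the one fixed in the cited computation of Naef.

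\textbf{Step 2 (image of the rotation loop).} Apply $ev_0$ to the rotation loop: $\theta\mapsto\gamma(\theta)$ is just $\gamma$ itself, which represents $g^l$. Hence $ev_{0*}\rho_*([S^1]\otimes1)=l\cdot[g]$, and by Step 1 this gives $\rho_*([S^1]\otimes 1)=l\cdot x_l$.

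The main obstacle is making sure that the identification of $x_l$ as ``positive generator'' in the paper (following Naef's spectral sequence computation) agrees with the pullback of the generator of $H_1(M)$ under $ev_0$. I would check this through the Leray--Serre spectral sequence of the fibration above. With $\pi_1(M)$ acting trivially, $E^2_{1,0}=H_1(M)$ and $E^2_{0,1}=H_1(\Omega_l M)=0$. So $H_1(L_lM)\to H_1(M)$ is an isomorphism, which justifies the normalization. A minor further point is that the basepoint of the free loop does not matter, since the component is path-connected.
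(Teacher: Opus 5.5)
Your argument is correct, but it takes a different route from the paper. The paper does no topological computation. Following Naef, it identifies $H_0(LM)$ with $\Z[\Z/9\Z]$ and $H_1(LM)=\bigoplus_l H_1(L_lM)$ with the formal K\"ahler forms $((\Z/9\Z)[t]/(t^9-1))\,dt/t$. It then uses the fact that, under this identification, the operator $H_0(LM)\to H_1(LM)$ induced by the circle action is the de Rham differential, so $t^l\mapsto l\,t^l\,dt/t$. This is the Hochschild-homology statement that the rotation induces Connes' $B$, which is $d$ in degree $0$.

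You instead compose with $\mr{ev}_0$. The rotation circle $\theta\mapsto\gamma(\cdot+\theta)$ evaluates to $\gamma$ itself, hence to $l[g]\in H_1(M)$. Moreover, $\mr{ev}_{0*}\colon H_1(L_lM)\to H_1(M)$ is an isomorphism, because the fiber $\Omega_lM$ of $\mr{ev}_0|_{L_lM}$ is connected (since $\pi_1(M)$ is abelian) and simply connected (since $\pi_1(\Omega_lM)\cong\pi_2(M)=0$).

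Your route is self-contained and uses only $\pi_2(M)=0$ and the commutativity of $\pi_1(M)$. It also makes the normalization of the ``positive generator'' explicit, and that normalization agrees with how the paper later uses these classes: the classes $t^pt_2^q\,dt/t$ are normalized by their projection to $H_1(M)$. The paper's route costs only a citation, and it places the answer directly in the $t,t_2$ bookkeeping used for the string coproduct formula and the tables.

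One small correction. The description $\pi_1(L_lM,\gamma)\cong\pi_2(M)\rtimes Z(g^l)$ and the claim that $\mr{ev}_0$ has a homotopy section on every component are not justified in general. In general $\pi_1(L_lM,\gamma)$ is only an extension of $Z(g^l)$ by a quotient of $\pi_2(M)$. You need neither claim: the exact sequence $\pi_1(\Omega_lM)\to\pi_1(L_lM)\to\pi_1(M)\to\pi_0(\Omega_lM)$, with both outer terms trivial, already gives the isomorphism you use. Naturality of the Hurewicz map then transfers it to $H_1$.
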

    \begin{proof}
        Following \cite{Naef24}, we identify $H_0(LM)$ with $\Z[\Z/9\Z]$, and consider the module of formal de-Rham 1-forms $\Omega^1=\Omega^1(\Z[\Z/9\Z])=((\Z/9\Z)[t]/(t^9-1))~dt/t$. The correspondence 
        \begin{align*}
            (\Z/9\Z)[t]/(t^9-1)~dt/t&\ra H_1(LM)=\bigoplus_{l\in\Z/9\Z} H_1(L_lM)
        \end{align*}
        given by
        \begin{align*}
            (c_0+c_1t+\dots+c_8t^8)\frac{dt}{t}&\mapsto (c_0,\dots,c_8)
        \end{align*}
        identifies coefficients with the generators of each $H_1(L_lM)$. Under this identification, the $S^1$ action induces the map $H_0(LM)\ra H_1(LM)$ corresponding to the formal de Rham differential
        \begin{align*}
            \Z[\Z/9\Z]&\ra (\Z/9\Z)[t]/(t^9-1)~dt/t\\
            (c_0+c_1t+\dots+c_8t^8)&\mapsto (c_1t+2c_2t^2+\dots+8c_8t^8)\frac{dt}{t}.
        \end{align*}
    \end{proof}
    Now, we consider the following component:
    \begin{align*}
        \vee_{S^1}\colon H_2^{S^1}(LM)\ra\left(H_1^{S^1}(LM)\otimes H_0^{S^1}(LM)\right)\oplus\left(H_0^{S^1}(LM)\otimes H_1^{S^1}(LM)\right).
    \end{align*}
    Since $\mr{Tor}\left(H_0(LM), H_0(LM)\right)=0$, torsion terms do not appear. Let $\mr{pr}_1$ denote the projection onto the first summand. Since $\mr{pr}_1$ commutes with $\pi_*\otimes \pi_*$, we may rearrange the order of composition as convenient. (Since both the string coproduct and the string cobracket satisfy coassociativity, composing with the projection does not affect the componentwise structure and hence causes no difficulty.)\\
    Let $t^pt_2^q~dt/t\in H_1(L_pM)\otimes H_0(L_qM)$ be the homology class whose projection to 
    \begin{align*}
        H_1(M)\otimes H_0(L_qM)\cong\Z/9\Z
    \end{align*}
    is the positive generator. For integers $l,m$, let $[\rho_{l,m}] \in H_3(L_lM)$ be the homology class obtained by pushing forward the fundamental class $[M]$ of $M$ along $\rho_{l,m}$. Here, $\rho_{l,m}\colon M\rightarrow LM$ is the map induced by
    \begin{align*}
        \tilde\rho_{l,m}\colon S^3&\ra LM\\
        (z_1,z_2)&\mapsto (t\mapsto p\left(e^{2\pi \sqrt{-1}lt}z_1,e^{2\pi \sqrt{-1}(kl+9m)t}z_2\right)
    \end{align*}
    where $p\colon S^3\ra M$ is the canonical quotient map. Using the identification $H_1(LM)\otimes H_0(LM)\cong R/I$, where $R=(\Omega^1/(\Z/9\Z)~dt/t)\otimes (\Z[\Z/9\Z]/\Z\cdot 1)$ and $I$ is the ideal generated by $t^9-1$ and $t_2^9-1$. The same argument as in \cite[Proposition 3.1]{Naef24}, shows that for $l\neq 0$, the equality
    \begin{align}
        \begin{aligned}\label{string_coproduct_calc}
            \mr{pr}_1\circ\vee(j_*[\rho_{l,m}])&=\left(tt_2^{l-1}+t^2t_2^{l-2}+\cdots+t^{l-1}t_2\right)\frac{dt}{t}\\
            &+r\left(t^rt_2^{(kl+9m-1)r}+t^{2r}t_2^{(kl+9m-2)r}+\cdots+t^{(kl+9m-1)r}t_2^r\right)\frac{dt}{t} \mod{I}
        \end{aligned}
    \end{align}
    holds. Here, $j\colon LM\ra (LM,M)$ is natural inclusion, and $r$ is the inverse of $k$ modulo $9$.
    Since the string coproducts of $\{[\rho_{l,1+nl}]\}_{n=1,2,\dots,9}$ are all distinct, they span $H_3(L_lM)$.
    \section{Computation of the string cobracket on the lens space}\label{section3}
    In this section, we compute the string cobracket on the lens space $L(9;k)$ for $k=1,4$. More precisely, we show that the number of components on which the string cobracket is nonzero differs between two lens spaces. We calculate $H_r(L_lM)$ for $r=0,1,2,3$, then we investigate $\tau_*$ and $\pi_*$. We denote a nonzero element of $\Z/9\Z$ by $l$.

    \subsection{Computation of $H_r^{S^1}(L_lM)$ for $r\leq 3$}
    We analyze the Leray--Serre spectral sequence associated with the fibration
    \begin{align*}
        L_lM\ra ES^1\times_{S^1}L_lM\xrightarrow{\varpi} BS^1.
    \end{align*}
    \begin{proposition}
        Let $p$ be even. Then the differential of the $E^2$-term $d_{p,q}^2\colon E_{p,q}^2\ra E_{p-2,q+1}^2$ coincides with $u_p\otimes x\mapsto u_{p-2}\otimes \rho_*([S^1]\otimes x)$ where $u_p\in H_p(BS^1)$ denotes the generator.
    \end{proposition}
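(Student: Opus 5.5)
We want the $d^2$ differential in the homology Leray--Serre spectral sequence of the Borel fibration $L_lM\to ES^1\times_{S^1}L_lM\to BS^1$. Since $BS^1=\mathbb{C}P^\infty$ is simply connected, the local coefficients are trivial. Moreover $H_*(BS^1)$ is free and concentrated in even degrees, so $E^2_{p,q}\cong H_p(BS^1)\otimes H_q(L_lM)$. The claim is the standard fact that $d^2$ is cap product with the Euler class of the circle bundle, composed with the action map; we verify it by naturality from the universal example.

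\textbf{Reduction to the first stage of the filtration.} Filter $BS^1$ by its skeleta $\mathbb{C}P^{0}\subset\mathbb{C}P^1\subset\cdots$. The differential $d^2\colon E^2_{2,q}\to E^2_{0,q+1}$ is the boundary map of the pair
\begin{align*}
(\varpi^{-1}(\mathbb{C}P^1),\varpi^{-1}(\mathbb{C}P^0)) \simeq (D^2\times L_lM,\ S^1\times L_lM)\cup\cdots .
\end{align*}
Over $\mathbb{C}P^1$ the Borel construction is $S^3\times_{S^1}L_lM$. This is obtained from the fibre $L_lM$ by attaching $D^2\times L_lM$ along the clutching map $S^1\times L_lM\to L_lM$, which is exactly the action $\rho$ because $S^3\to S^2$ is the Hopf bundle with clutching function the identity of $S^1$. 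Hence the image of $u_2\otimes x$ under the connecting map is $\partial([D^2]\times x)$, namely the pushforward $\rho_*([S^1]\otimes x)$ (up to the usual orientation sign, which we fix by the choice of generators $u_p$). This proves the statement for $p=2$.

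\textbf{General even $p$.} We use the $H^*(BS^1)$-module structure: the spectral sequence is a module over $H^*(BS^1)=\mathbb{Z}[c]$ via cap product, where $c\cap u_p=u_{p-2}$. Cap product with $\varpi^*c$ on the total space commutes with the differentials, and on $E^2$ it acts as $c\cap$ on the base factor. Writing $u_p\otimes x=c^{(p-2)/2}$-preimage is awkward, so we argue the other way, via the $S^1$-equivariant map of pairs given by the inclusion of filtrations $\mathbb{C}P^{p/2-1}\subset\mathbb{C}P^{p/2}$. The relative cell is $D^{p}\times L_lM$, and its attaching map restricted to the relevant $E^1$-piece factors through the Hopf fibration $S^{p-1}\to\mathbb{C}P^{p/2-1}$. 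Its fibre circle acts via $\rho$, and the top cell of $\mathbb{C}P^{p/2-1}$ is hit with degree one. Alternatively, and more cleanly, the Gysin-type identity $d^2(u_p\otimes x)=(c\cap u_p)\otimes \Delta x$ follows from the $p=2$ case together with the compatibility of $d^2$ with the comodule structure $H_*(BS^1)\to H_*(BS^1)\otimes H_*(BS^1)$, which is given by the diagonal, and with the map of spectral sequences induced by the diagonal $BS^1\to BS^1\times BS^1$.

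\textbf{Main obstacle.} The main obstacle is making this last step rigorous, i.e.\ identifying the transgression-type differential for higher $p$ with the $p=2$ one without sign or indeterminacy issues, especially with $\mathbb{Z}$-coefficients. The first approach to try is the cap-product module structure, since $d^2$ is a derivation-free $H^*(BS^1)$-linear map. The check needed is that $c\cap$ on $E^2_{p,q}$ is surjective onto $E^2_{p-2,q}$ (it is an isomorphism $u_p\mapsto u_{p-2}$), which gives the formula once linearity is checked.
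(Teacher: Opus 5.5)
Your treatment of $p=2$ is the paper's Step 1: the lift $(D^2,S^1)\times L_lM\to(Y^2,Y^0)$ of the characteristic map restricts to $\rho$ on $S^1\times L_lM$, and naturality of $\partial$ does the rest. The cap-product mechanism you finally single out is the paper's Step 2. The gap is that you never carry out the passage to $p\ge 4$. You list three alternatives without completing any of them. Moreover, the condition you call ``the check needed'' (that $c\cap$ maps $E^2_{p,q}$ onto $E^2_{p-2,q}$) is not the one that makes the argument work.

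The missing step is an induction on $p$ starting from the geometric case $p=2$. The class $c$ lies in the base row $E_2^{2,0}$ of the cohomology spectral sequence, so $d_r c=0$. The Leibniz rule for the cap pairing then gives $d^2(c\cap z)=c\cap d^2 z$. Assuming the formula for $p-2$,
\begin{align*}
c\cap d^2(u_p\otimes x)=d^2(u_{p-2}\otimes x)=u_{p-4}\otimes\rho_*([S^1]\otimes x)=c\cap\bigl(u_{p-2}\otimes\rho_*([S^1]\otimes x)\bigr).
\end{align*}
You conclude because $c\cap\colon E^2_{p-2,q+1}\to E^2_{p-4,q+1}$ is \emph{injective}, which holds exactly when $p\ge 4$. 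What matters is injectivity on the target, not surjectivity on the source. The fact that $c\cap$ kills $E^2_{0,q+1}$ is precisely why $p=2$ must be done by hand.

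Of your other two alternatives, the comodule argument is not usable as written. You never specify a fibration over $BS^1\times BS^1$ that pulls back to the Borel fibration along the diagonal. The attaching-map argument, by contrast, would give a genuinely different, direct proof that bypasses the module structure. The lift of the boundary of the $p$-cell is the quotient $S^{p-1}\times L_lM\to S^{p-1}\times_{S^1}L_lM$. Pull it back along the relative homeomorphism $(D^{p-2}\times S^1,S^{p-3}\times S^1)\to(S^{p-1},S^{p-3})$ onto the top cell of $\mathbb{C}P^{p/2-1}$. It becomes $(v,\theta,\gamma)\mapsto(\theta^{\pm1}v,\rho(\theta,\gamma))$. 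The twist on $v$ contributes nothing, by naturality of the cross product and $H_{p-1}(D^{p-2},S^{p-3})=0$. Either route closes the gap, but one of them has to be written out.
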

    \begin{proof}
        Let $p$ be even and $(BS^1)^{(p)}$ denote the $p$–skeleton of $BS^1$ as $\mathbb{C}P^\infty$, and set $Y^p=\varpi^{-1}((BS^1)^{(p)})$. By the definition of the Leray--Serre spectral sequence, $E_{p,q}^1=H_{p+q}(Y^p,Y^{p-1})$. If $p$ is odd, then $Y^p$ is $Y^{p-1}$, and hence $E_{p,q}^1=0$. Therefore 
        \begin{align*}
            d_{p,q}^1\colon E_{p,q}^1\ra E_{p-1,q}^1
        \end{align*}
        is the zero map. Consequently, 
        \begin{align*}
            E_{p,q}^2=H_{p+q}(Y^p,Y^{p-1})=H_{p+q}(Y^p,Y^{p-2}).
        \end{align*}
        From now on, $p$ will be even.\\
        \underline{Step 1: The case $p=2$}\\
        We follow the proof of Theorem 5.3 in \cite{Hatcher}. Let $\Phi_2\colon D^2\ra (BS^1)^{(2)}$ be the characteristic map for the $2$-cell of $BS^1$. Let $e_0$ be a point in $(BS^1)^{(2)}$. Then we obtain a map $\tilde\Phi_2\colon (D^2,S^1)\times\varpi^{-1}(\Phi_2(e_0))\ra (Y^p,Y^{p-1})$. As in \cite{Hatcher}, the map induces an isomorphism 
        \begin{align*}
            \varphi\colon H_2(S^{\Delta}((BS^1)^{(2)},(BS^1)^{(1)};\mathcal{H}_q))\ra& H_{q+2}(Y^2,Y^1)\\
            [\Phi_2]\otimes x \mapsto&(\tilde\Phi_2)_*(\mu\otimes x).
        \end{align*}
        where $\mu$ is the fundamental class of $(D^2,S^1)$, $S^{\Delta}(-,-;-)$ is the relative cellular singular chain complex and $\mathcal{H}_q$ is the local system on $BS^1$. Since $L_lM$ is path-connected, the local system $\mathcal{H}_q$ is trivial. Thus,
        \begin{align*}
            H_2(S^{\Delta}((BS^1)^{(2)},(BS^1)^{(1)};\mathcal{H}_q))\cong H_2((BS^1)^{(2)},(BS^1)^{(1)})\otimes H_q(L_lM).
        \end{align*}
        The differential $d_{2,q}^2\colon H_q(Y^2,Y^0)\ra H_{q+1}(Y^0)$ is the boundary map in the long exact sequence of the pair $(Y^2,Y^0)$. Therefore,
        \begin{align*}
            d_{2,q}^2(\varphi ([\Phi_2]\otimes x))=\partial_*((\tilde\Phi_2)_*(\mu\otimes x)).
        \end{align*}
        By construction,
        \begin{align*}
            \tilde\Phi_2\colon (D^2,S^1)\times L_lM\ra& (Y_2,Y_0)
        \end{align*}
        is described as $(z,\gamma)\mapsto [[z:1],\gamma]$ and its restriction to $S^1\times L_lM$ is 
        \begin{align*}
            (\tilde\Phi_2)|_{S^1\times L_lM}\colon S^1\times L_lM\ra& Y_0\\
            (e^{2\pi \sqrt{-1}t},\gamma)\mapsto& [e^{2\pi \sqrt{-1}t},\gamma]=\rho(t,\gamma).
        \end{align*}
        Thus, $(\tilde\Phi_2)|_{S^1\times L_lM}=\rho$. By naturality of the boundary map, 
        $\partial_*((\tilde\Phi_2)_*(\mu\otimes x))=\rho_*(\partial_*(\mu\otimes x))=\rho_*([S^1]\otimes x)$ holds.\\
        
        \noindent\underline{Step 2: The case of general even $p$}\\
        Let $e\in H^2(BS^1)$ be the Euler class of the tautological complex line bundle. Using the cap product and the naturality of the differential of the spectral sequence, we compute:
        \begin{align*}
            &(e\otimes 1)\cap (u_{p-2}\otimes \rho_*([S^1]\otimes x)-d_{p,q}^2(u_p\otimes x))\\
            &=u_{p-4}\otimes \rho_*([S^1]\otimes x)- d_{p,q}^2(e\otimes 1 \cap u_p\otimes x)\\
            &=u_{p-4}\otimes \rho_*([S^1]\otimes x)-d_{p-2,q}^2(u_{p-2}\otimes x)\\
            &=0.
        \end{align*}
        Since the cap product 
        \begin{align*}
            e\otimes 1\cap ~\colon H_p(BS^1)\otimes H_q(L_lM)\ra H_{p-2}(BS^1)\otimes H_q(L_lM)
        \end{align*}
        is an isomorphism, we conclude that $u_p\otimes \rho_*([S^1]\otimes x)=d_{p,q}^2(u_{p-2}\otimes x)$. This completes the proof.
    \end{proof}
    \begin{proposition}\label{rho_2}
        Let $y_l\in H_2(L_lM)$ be a generator. Then, the equality $\mr{ev}_*\circ\rho_*([S^1]\otimes y_l)=[M]$ holds.
    \end{proposition}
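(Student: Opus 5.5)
Throughout, $l\neq 0$ in $\Z/9\Z$. The goal is to find an explicit representative of the generator $y_l\in H_2(L_lM)$ and to compute the $3$-cycle $\mr{ev}\circ\rho\circ([S^1]\times y_l)$ directly in $M$.

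\textbf{Step 1: a geometric representative of $y_l$.} The natural candidate is the family of closed geodesics through the fibres of $p\colon S^3\ra M$. Set
\begin{align*}
\sigma_l\colon S^3\ra L_lM,\qquad (z_1,z_2)\mapsto \left(t\mapsto p\left(e^{2\pi\sqrt{-1}lt/9}z_1,\,e^{2\pi\sqrt{-1}klt/9}z_2\right)\right).
\end{align*}
This is a loop because $(e^{2\pi\sqrt{-1}/9},e^{2\pi\sqrt{-1}k/9})$ generates the deck group. It represents the class $l$ in $\pi_1(M)$.

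The map $\sigma_l$ is invariant under the $S^1$-action $\lambda\cdot(z_1,z_2)=(\lambda z_1,\lambda^k z_2)$, up to reparametrisation of the loop. Hence it induces a family over $S^3/S^1$, with a $2$-dimensional base. Equivalently, one can take a $2$-disc or $2$-sphere transverse to these circles.

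I would use Naef's identification $H_2(L_lM)\cong\Z$, coming from his computation via the fibration $\Omega M\ra LM\ra M$ with universal cover $S^3$. It gives $y_l$ as the class of a $2$-cycle $\Sigma\ra L_lM$, namely $\Sigma=S^2$ parametrising the orbits of this geodesic family, with the loop starting point being the orbit point. One has to check this is a generator: the Serre spectral sequence class in $E_{2,0}$ or $E_{0,2}$ must map to a generator. This is done by comparing with Naef's computation.

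\textbf{Step 2: computing the image under $\mr{ev}\circ\rho$.} The map $\mr{ev}\circ\rho\colon S^1\times\Sigma\ra M$ sends $(\theta,\gamma)\mapsto\gamma(\theta)$, so its image consists of the points lying on the loops of the family.

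Since every point of $S^3$ lies on exactly one orbit circle, this map is essentially the composite $S^1\times S^2\ra S^3\ra M$. Here the first map is the degree-one Seifert/Hopf-type map that collapses the orbit structure.

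The degree of $S^1\times S^2\ra S^3$ is computed by counting preimages of a regular point. Pushing down through $p$ divides by $9$, but the loop traverses only $l/9$ of the orbit circle. The net count should therefore be $1$, giving $\mr{ev}_*\rho_*([S^1]\otimes y_l)=[M]$ up to sign. The sign is then fixed by the choice of generator $y_l$.

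\textbf{Main obstacle and alternative route.} The hardest step is Step 1: showing that the chosen $2$-cycle really generates $H_2(L_lM)\cong\Z$, rather than a multiple of the generator.

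If the direct identification is messy, I would argue indirectly. The map $\mr{ev}_*\circ\rho_*([S^1]\otimes-)\colon H_2(L_lM)\ra H_3(M)\cong\Z$ is a homomorphism. It therefore suffices to exhibit one class $y$ whose image is $\pm[M]$. That class must then be a generator, since otherwise its image would be divisible by some integer greater than $1$.

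The classes $[\rho_{l,m}]\in H_3$ of the previous section suggest a variant of this. Alternatively, one could use a class in $H_2$ pulled back from the universal cover, i.e. $H_2(LS^3)\cong\Z$, mapped through $p$. The degree count in Step 2 then directly gives $\pm1$, which proves both generation and the equality.
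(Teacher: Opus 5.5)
Your proposal has a genuine gap at each step. Moreover, the equality cannot be proved as stated, and the paper's own proof has a matching error.

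\textbf{Step 1 produces zero, not a generator.} Each loop in your family is an orbit loop $t\mapsto p(\phi_t z)$. Rotating it by $\theta$ gives the orbit loop through $p(\phi_\theta z)$. So all these loops, and all their rotations, lie in the image of the section $s\colon M\to L_lM$, $p(z)\mapsto (t\mapsto p(\phi_t z))$. Any $2$-cycle assembled from them is therefore $s_*$ of a class in $H_2(M)=0$.

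The generator has to come from the fibre. In the Serre spectral sequence of $\Omega_lM\to L_lM\to M$ one has $E^2_{2,0}=E^2_{1,1}=0$, so $H_2(\Omega_lM)\to H_2(L_lM)$ is onto. Since $\Omega_lM\simeq\Omega S^3$ is simply connected, $y_l$ is represented by a \emph{based} family $\sigma\colon S^2\to\Omega_lM$. This is the paper's route: it transports a class of $\pi_2(\Omega S^3)\cong\pi_3(S^3)$ into $\Omega_lM$ via $\phi$ and applies Hurewicz.

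\textbf{Step 2 runs the wrong way.} Pushing forward along the $9$-sheeted covering $p$ multiplies by $9$; it does not divide. Done correctly, the computation shows the target is unreachable. Let $g(z_1,z_2)=(e^{2\pi\sqrt{-1}/9}z_1,e^{2\pi\sqrt{-1}k/9}z_2)$ be the deck generator and $b$ a lift of the base point. Lift each based loop $\sigma(x)$ to the path in $S^3$ from $b$ to $g^lb$. This gives $\tilde E\colon[0,1]\times S^2\to S^3$, constant on each end, i.e.\ a map $\bar E\colon\Sigma S^2\to S^3$ from the unreduced suspension. Since $\mr{ev}\circ\rho\circ(\mr{id}\times\sigma)$ factors through $(S^1\times S^2)/(\{0\}\times S^2)$, you get $\mr{ev}_*\rho_*([S^1]\otimes y)=\pm p_*\bar E_*[\Sigma S^2]=\pm 9\deg(\bar E)\,[M]$. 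The assignment $\sigma\mapsto\bar E$ identifies $\pi_2(\Omega_lM)$ with $\pi_3(S^3)$, so a generator gives $\pm9[M]$.

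Consequently no class of $H_2(L_lM)$ maps to $\pm[M]$. Your fallback (exhibit any class with image $\pm[M]$) has nothing to find. Classes from $H_2(LS^3)$ land in $L_0M$, not $L_lM$, and there the same reasoning gives $p_*[S^3]=9[M]$.

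\textbf{The paper's proof undercounts preimages.} In real coordinates $F(x,t)=(\sin(2\pi t)x,\cos(2\pi t))$. This sends both $((1,0,0),1/4)$ and $((-1,0,0),3/4)$ to $(1,0,0,0)$ with the same local sign. Hence $\deg F=\pm2$, so $F$ is twice a generator. Likewise $p\circ\phi\circ F$ has $18$ preimages of the chosen regular value, not one. The correct identity is $\mr{ev}_*\rho_*([S^1]\otimes y_l)=\pm9[M]$. This would also change the later claim $E^3_{0,3}\cong\Z/9\Z$, since the cokernel of $d^2_{2,2}$ would then have order $81$.
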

    \begin{proof}
        Let $p\colon S^3\ra M$ be the canonical quotient map, viewing $S^3$ as a subspace of $\mathbb{C}^2$. Let $(f_1,f_2)$ be an element in $\Omega S^3$. For a loop $(f_1,f_2)\in \Omega S^3$, the composition $p\circ (f_1,f_2)$ lies in $\Omega M$. Thus, we obtain a homeomorphism $\Omega_0 M\ra\Omega_l M$ induced by the self-homomorphism 
        \begin{align*}
            \phi\colon (x_1,x_2)\mapsto(e^{2\pi\sqrt{-1}lt/9}x_1,e^{2\pi\sqrt{-1}klt/9}x_2).
        \end{align*}
        The map $p\circ (e^{2\pi\sqrt{-1}lt/9}f_1,e^{2\pi\sqrt{-1}klt/9}f_2)$ represents the generator of $\pi_1(M)$. Indeed, for the generator $\alpha_l=p\circ (e^{2\pi\sqrt{-1}lt/9},e^{2\pi\sqrt{-1}klt/9})$, since $(f_1,f_2)$ is contractible, a homotopy $H$ from the constant loop at $(1,1)$ to $(f_1,f_2)$ induces a homotopy  $p\circ\phi\circ H$ from $\alpha_l$ to $p\circ (e^{2\pi\sqrt{-1}l/9}f_1,e^{2\pi\sqrt{-1}kl/9}f_2)$.\\
        Let $[\mr{id}]\in\pi_3(S^3)$ denote the identity class. Under the correspondence $\pi_3(S^3)\cong \pi_2(\Omega S^3)$, this class is represented by the map
        \begin{align*}
            (x_1,x_2,x_3)\mapsto \left[t\mapsto 
            \begin{pmatrix}
                (x_1+\sqrt{-1}x_2)\sin (2\pi t)\\
                x_3\sin(2\pi t)+\sqrt{-1}\cos(2\pi t)
            \end{pmatrix}\right]
        \end{align*}
        which we denote by $F$.\\
        Now we compute the degree of $F$ by treating it as a smooth map $F\colon S^2\times S^1\ra S^3$. Its degree is computed as the sum of the signs of the points in the preimage of a regular value (cf. \cite{Hirsch}). We choose $y=((1,0,0),1/4)\in S^2\times S^1$ and show that it is a regular point. Define $\tilde{F}\colon \mathbb{R}^3\times\mathbb{C}\ra \mathbb{C}^2$ which is an extension of $F$ and $g\colon \mathbb{R}^3\times\mathbb{C}\ra \mathbb{R}^2$ by 
        \begin{align*}
            \tilde{F}\left(x_1,x_2,x_3,Re^{2\pi\sqrt{-1}t}\right)&=R
            \begin{pmatrix}
                (x_1+\sqrt{-1}x_2)\sin (2\pi t)\\
                x_3\sin(2\pi t)+\sqrt{-1}\cos(2\pi t)
            \end{pmatrix}\\
            g\left(x_1,x_2,x_3,Re^{2\pi\sqrt{-1}t}\right)&=\left(\sqrt{x_1^2+x_2^2+x_3^2},R\right).
        \end{align*}
        Note that $g^{-1}((-1,1))=S^2\times S^1$ and $(1,1)$ is a regular value of $g$. Since 
        \begin{align*}
            \mr{rank}(dF)_y=3-\dim\mr{Ker}(dF)_y=3-(5-\dim \mr{Ker}(d\tilde{F})_y\cap T_y(S^2\times S^1)),
        \end{align*}
        it suffices to compute $\mr{Ker}(d\tilde{F})_y\cap T_y(S^2\times S^1)$. Because 
        \begin{align*}
            \mr{Ker}(dF)_y&=\mr{Ker}(d\tilde{F})_y\cap T_y(S^2\times S^1)\\
            &=\mr{Ker}(d\tilde{F})_y\cap \mr{Ker}(dg)_y\\
            &=\mr{Ker}
            \begin{pmatrix}
                (d\tilde{F})_y\\
                (dg)_y
            \end{pmatrix},
        \end{align*}
        the Jacobian matrix at $y$ is 
        \begin{align*}
            \begin{pmatrix}
                (d\tilde{F})_y\\
                (dg)_y
            \end{pmatrix}=
            \begin{pmatrix}
                1 & 0 & 0 & 0 & 0\\
                0 & 1 & 0 & 0 & 0\\
                0 & 0 & 1 & 0 & 0\\
                0 & 0 & 0 & 0 & -2\pi\\
                1 & 0 & 0 & 0 & 0\\
                0 & 0 & 0 & 1 & 0
            \end{pmatrix}.
        \end{align*}
        Thus, $\dim \mr{Ker}(d\tilde{F})_y\cap T_y(S^2\times S^1)=0$, and hence $\mr{rank}(dF)_y=3$. Since both $p$ and $\phi$ are local diffeomorphisms and $[e^{\pi\sqrt{-1}l/9},0]$ is a regular value of $p\circ\phi\circ F$, $(p\circ\phi\circ F)^{-1}([e^{\pi\sqrt{-1}l/9},0])$ consists of exactly one point. Hence, the degree of $p\circ \phi\circ F\colon S^2\times S^1\ra M$ is $1$.\\
        Finally, we consider the homology of $L_lM$. Since $\Omega_lM$ is simply connected, the Hurewicz theorem yields $\pi_2(\Omega_l M)\cong H_2(\Omega_lM)$. Moreover, the map $H_2(\Omega_l M)\xrightarrow{\iota_*}H_2(L_lM)$ is an isomorphism, so a generator of $H_2(L_lM)$ is $(\iota\circ p\circ\phi\circ F)_*[S^2]$. Using $\mr{ev}\circ\rho(\mr{id}\times p\circ\phi\circ F)= p\circ\phi\circ F$, we obtain
        \begin{align*}
            \mr{ev}_*\rho_*([S^1]\otimes(\iota\circ p\circ\phi\circ F)_*[S^2])=(p\circ\phi\circ F)_*([S^1\times S^2])=[M]
        \end{align*}
        as required.
    \end{proof}

    Combining these two propositions and Proposition \ref{rho_0}, we compute $H_r^{S^1}(L_lM)$ for $r=0,1,2,3$. Under the natural identification $E_{p,q}^2=H_p(BS^1)\otimes H_q(L_lM)$. We see that the low-degree $E^2$-page is:
    \begin{align*}
        \begin{array}{c|ccccc}
            3 & \Z\oplus\Z/9\Z & 0 & \Z\oplus\Z/9\Z & 0 & \Z\oplus\Z/9\Z\\
            2 & \Z & 0 & \Z & 0 & \Z\\
            1 & \Z/9\Z & 0 & \Z/9\Z & 0 & \Z/9\Z\\
            0 & \Z & 0 & \Z & 0 & \Z\\
            \hline
            & 0 & 1 & 2 & 3 & 4
            \end{array}
    \end{align*}
    Because $L_lM$ is a path-connected component, $H_0^{S^1}(L_lM)\cong \Z$. Moreover, Proposition \ref{rho_0} gives $\mr{Im}d_{2,0}^2\cong l~\Z/9\Z$, hence $H_1^{S^1}(L_lM)=0$ for $l\neq 3,6$ and $H_1^{S^1}(L_lM)\cong \Z/3\Z$ for $l=3,6$\footnotemark. $H_2^{S^1}(L_lM)$ is isomorphic to $\Z^2$. From proposition \ref{rho_2}, $E_{0,3}^3\cong \Z/9\Z$ and $E_{0,3}^5\cong (\Z/9\Z)/\mr{Im}d_{4,0}^4$ satisfy. In addition, $E_{1,2}^\infty=E_{3,0}^\infty=0$ holds and $E_{2,1}^3=0$ for $l\neq 3,6$ and $E_{2,1}^3\cong \Z/3\Z$ for $l=3,6$. Therefore, $H_3^{S^1}(L_lM)$ satisfies the exact sequence
    \begin{align*}
        0\ra E_{0,3}^5 \ra H_3^{S^1}(L_lM)\ra E_{2,1}^3\ra 0.
    \end{align*}
    \footnotetext{This phenomenon arises from the fact that $9$ is not prime, which allows nontrivial $S^1$-equivariant homology in the components $L_3M$and $L_6M$, and it does not occur for prime lens spaces such as $L(7;k)$.}
    
    \subsection{Computation of $\tau_*$}
    We determine on the image of $\tau_*$ by considering the Gysin exact sequence for $S^1$-bundle $ES^1\times L_lM\ra ES^1\times_{S^1} L_lM$. We consider $l\neq 3,6$. Since the diagram
    \[
    \begin{tikzcd}
        H_2^{S^1}(L_lM)\arrow[r,"\tau_*"]\arrow[d,"\cong"]\arrow[rd, phantom, "\circlearrowright"]& H_3(L_lM) \arrow[r,"\pi_*"]\arrow[d,"\cong"]\arrow[rd, phantom, "\circlearrowright"]& H_3^{S^1}(L_lM)\arrow[r] \arrow[d,"\cong"]\arrow[rd, phantom, "\circlearrowright"]&H_1^{S^1}(L_lM)\arrow[d,"\cong"]\\
        \Z^2\arrow[r]& \Z\oplus\Z/9\Z\arrow[r,->>]& (\Z/9\Z)/\mr{Im}d_{4,0}^4\arrow[r]& 0
    \end{tikzcd}
    \]
    commutes and the row is exact, $\tau_*$ is the surjection.\\
    For $l=3,6$, the image of $\pi_*$ coincides with $H_3(L_lM)\twoheadrightarrow E_{0,3}^\infty\hookrightarrow H_3^{S^1}(L_lM)$ (cf. \cite[Theorem 5.9]{Hatcher}). Hence, $\mr{Im}\pi_*\cong (\Z/9\Z)/\mr{Im}d_{4,0}^4$ holds. Thus, we obtain
    \[
        \begin{tikzcd}
            H_2^{S^1}(L_lM)\arrow[r,"\tau_*"]\arrow[d,"\cong"]\arrow[rd, phantom, "\circlearrowright"]& H_3(L_lM) \arrow[r,->>,"\pi_*"]\arrow[d,"\cong"]\arrow[rd, phantom, "\circlearrowright"]& E_{0,3}^5\arrow[r] \arrow[d,"\cong"]\arrow[rd, phantom, "\circlearrowright"]&0\arrow[d,"\cong"]\\
            \Z^2\arrow[r]& \Z\oplus\Z/9\Z\arrow[r,->>]& (\Z/9\Z)/\mr{Im}d_{4,0}^4\arrow[r]&0
        \end{tikzcd}
        .
    \]
    Therefore, for $l\neq 0$ and a generator $y_l\in H_2(L_lM)$, there exists $m\in\Z$ such that $\tau_*\circ\pi_*(y_l)=[\rho_{l,m}]$ holds.

    \subsection{Computation of $\pi_*$}\label{pi_*}
    For the $S^1$-bundle $\pi\colon ES^1\times L_lM\rightarrow ES^1\times_{S^1}L_lM$, we compute $\pi_*\colon H_r(L_lM)\ra H_r^{S^1}(L_lM)$ for $r=0,1,3$.\\
    For $r=0$, the map $\pi_*$ is an isomorphism by the Gysin exact sequence of the $S^1$-bundle $ES^1\times L_lM\ra ES^1\times_{S^1} L_lM$.\\
    Next, we consider $r=1$. If $l\neq 3,6$, then $H_1^{S^1}(L_lM)=0$ and hence $\pi_*\otimes\pi_*\circ\vee(j_*[\rho_{l,m}])=0$. By contrast, for $l=3,6$, the Gysin exact sequence of the $S^1$-bundle $ES^1\times L_lM\ra ES^1\times_{S^1} L_lM$ near degrees $0,1$ yields:
    \[
        \begin{tikzcd}
            H_1(L_lM)\arrow[r,->>,"\pi_*"]\arrow[d,"\cong"]\arrow[rd, phantom, "\circlearrowright"]& H_1^{S^1}(L_lM) \arrow[r]\arrow[d,"\cong"]\arrow[rd, phantom, "\circlearrowright"]& 0\arrow[d,"\cong"]\\
            \Z/9\Z\arrow[r,->>]& \Z/3\Z\arrow[r]&0
        \end{tikzcd}
    \]
    and 
    \[
        \begin{tikzcd}
            H_0(L_lM)\arrow[r,->>,"\pi_*"]\arrow[d,"\cong"]\arrow[rd, phantom, "\circlearrowright"]& H_0^{S^1}(L_lM) \arrow[r]\arrow[d,"\cong"]\arrow[rd, phantom, "\circlearrowright"]& 0\arrow[d,"\cong"]\\
            \Z\arrow[r,->>]& \Z\arrow[r]&0
        \end{tikzcd}
        .
    \]
    From these commutative diagrams, the image of $ct^pt_2^q~(c\in\Z/9\Z)$ by $\pi_*$ is $0$ for $p\neq 3,6$ and $c'\alpha_p\otimes \beta_q$ for $p=3,6$. Here, $c'$ is the value of $c$ in modulo $3$, and $\alpha_p,\beta_q$ are the generator of $H_1^{S^1}(L_lM),~H_0^{S^1}(L_lM)$.\\
    For $r=3$, we will use the following proposition. As a consequence, when $l\neq 3,6$, there exists $y_l'\in H_2^{S^1}(L_lM)$ such that $\tau_*(y_l')=[\rho_{l,1+n_ll}]-[\rho_{l,1+l}]$.
    \begin{proposition}
        $\mr{Im}d_{4,0}^4$ is isomorphic to $l\Z/9\Z$.
    \end{proposition}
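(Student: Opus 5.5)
The plan is to replace $d_{4,0}^4$ with the transfer and compute it through the Gysin sequence of the $S^1$-bundle $ES^1\times L_lM\ra ES^1\times_{S^1}L_lM$. Nothing in the earlier sections computes a higher differential directly, so the step that identifies $d_{4,0}^4(u_4)$ (equivalently, the image of $\tau_*$) is the one I expect to be the real obstacle.

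\emph{Reduction.} From the $E^2$-page and Proposition \ref{rho_2}, $E_{0,3}^3\cong\Z/9\Z$ is the torsion summand of $H_3(L_lM)$. The free summand is killed by $d^2_{2,2}$, since $\mr{ev}_*\rho_*([S^1]\otimes y_l)=[M]$ is primitive. Hence
\[
d_{4,0}^4\colon E^4_{4,0}\cong\Z\longrightarrow E^4_{0,3}\cong\Z/9\Z,
\]
and it suffices to determine $d_{4,0}^4(u_4)$ up to a unit. In the Gysin sequence
\[
H_2^{S^1}(L_lM)\xrightarrow{\tau_*}H_3(L_lM)\xrightarrow{\pi_*}H_3^{S^1}(L_lM),
\]
the image of $\pi_*$ is $E^\infty_{0,3}=E^5_{0,3}=(\Z/9\Z)/\mr{Im}\,d^4_{4,0}$, as already used in the computation of $\tau_*$. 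So $\mr{Im}\,d^4_{4,0}$ equals the torsion part of $\mr{Im}\,\tau_*$ after projecting away the free summand. I would verify this identification carefully with the edge-map description (cf. \cite[Theorem 5.9]{Hatcher}).

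\emph{Computing $\mr{Im}\,\tau_*$.} The group $H_2^{S^1}(L_lM)\cong\Z^2$ is generated by two classes.
\begin{enumerate}
\item A lift of $u_0\otimes y_l$. Its transfer is $\rho_*([S^1]\otimes y_l)$, which by Proposition \ref{rho_2} is a class $[\rho_{l,m_0}]$ hitting $[M]$.
\item A lift $w$ of $u_2\otimes[\gamma_l]$, where $\gamma_l$ is the base loop $t\mapsto p(e^{2\pi\sqrt{-1}lt/9},e^{2\pi\sqrt{-1}klt/9})$.
\end{enumerate}
I would realize $w$ geometrically as the Borel family over $\mathbb{C}P^1$ of rotated copies of a loop, so that $\tau_*(w)$ is the 3-cycle swept out by an $S^3\ra LM$ family. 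Concretely this should be $\tilde\rho_{l,m}$ for a second value of $m$, descending to $M$. Then $\mr{Im}\,\tau_*$ is spanned by two classes $[\rho_{l,m}]$, $[\rho_{l,m'}]$, possibly with multiplicities. Their difference lies in the torsion $\Z/9\Z$.

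\emph{Identifying the multiple.} Since the coproducts of the $[\rho_{l,1+nl}]$ are pairwise distinct, $j_*$ composed with $\mr{pr}_1\circ\vee$ is injective on the relevant part of $H_3(L_lM)$. I would therefore compare the two transfers via formula \eqref{string_coproduct_calc}. Shifting $m$ by $n$ shifts the exponent $kl+9m$, and modulo $I$ this changes the second sum by a controlled number of blocks. The aim is to show that the torsion difference $\tau_*(w)-c\,\tau_*(\tilde y_l)$ equals $l$ times the generator $[\rho_{l,1+2l}]-[\rho_{l,1+l}]$ of the torsion. Heuristically this holds because the loop class $\gamma_l$ winds $l$ times, and Proposition \ref{rho_0} already shows that rotation multiplies by $l$ in degree one. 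This matching of $w$ with a specific $\rho_{l,m}$ is the anticipated obstacle.

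If that identification resists, my fallback is a cap-product argument as in Step 2 of the differential proposition. Cap with the Euler class $e$ relates $d^4_{4,0}$ to $d^2$ applied through the Gysin map. I would compute the torsion image directly by pulling the whole fibration back along the $l$-fold covering of the base loop, reducing to the $l=1$ case where the answer is a generator. This would give $\mr{Im}\,d^4_{4,0}=l\,\Z/9\Z$.
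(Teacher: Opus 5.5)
There is a genuine gap. Your Gysin reduction is sound: since $\mr{Im}\,\tau_*=\ker\pi_*$ and $\mr{Im}\,\pi_*=E^\infty_{0,3}$, one has $\mr{Im}\,d^4_{4,0}\cong\mr{Im}\,\tau_*/\langle\tau_*\pi_*y_l\rangle$. However, the computation that carries the whole content of the proposition is never done, and the class your main route is built on does not exist.

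You take the second generator of $H_2^{S^1}(L_lM)\cong\Z^2$ to be a lift $w$ of $u_2\otimes[\gamma_l]$, realized as a Borel family of rotated loops over $\mathbb{C}P^1$. But by the $d^2$ formula and Proposition~\ref{rho_0}, $d^2_{2,0}(u_2)=lx_l\neq 0$ in $H_1(L_lM)\cong\Z/9\Z$. Geometrically, the $S^1$-orbit of a loop in $L_lM$ is homologically nontrivial, so the rotation family cannot be capped off over $\mathbb{C}P^1$. Hence $E^\infty_{2,0}$ is generated by $\tfrac{9}{\gcd(l,9)}u_2$, not $u_2$. Likewise $d^2_{4,0}(u_4)=u_2\otimes lx_l\neq 0$, so $E^4_{4,0}$ is generated by $\tfrac{9}{\gcd(l,9)}u_4$, not $u_4$.

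A natural source of a legitimate second class is the fact that $\rho_{l,m}$ is equivariant for a Seifert circle action on $M$. But then $\tau_*(w)$ is some multiple of $[\rho_{l,m}]$, and determining the torsion class $\tau_*(w)-c\,\tau_*(\pi_*y_l)$ modulo $\langle\tau_*\pi_*y_l\rangle$ is exactly the statement to be proved. For that step you offer only the heuristic that $\gamma_l$ ``winds $l$ times''. Formula \eqref{string_coproduct_calc} cannot supply it: it evaluates $\vee$ on classes $[\rho_{l,m}]$ already in hand and says nothing about which class $\tau_*(w)$ is.

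Your fallback does not close the gap either. Capping with $e$ alone lowers the base degree and annihilates column $0$, so naturality only relates $d^4_{4,0}$ to $d^4_{2,0}=0$. It cannot reach $d^2_{2,0}$, whose target $E_{0,1}$ differs from $E_{0,3}$ in \emph{fiber} degree. The missing ingredient is a degree-two fiber class, and this is the key idea of the paper's proof. Take a generator $c\in H^2(\Omega_lM)$, acting on $H_*(L_lM)$ through $H^*(L_lM)\cong H^*(M)\otimes H^*(\Omega_lM)$. It satisfies $c\cap(\iota_*^{-1}(x_l)\otimes\omega)=x_l$, so it maps $E^4_{0,3}\cong\Z/9\Z$ isomorphically onto $H_1(L_lM)$. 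The paper then pairs $c$ on the fiber with $e$ on the base. Using the transgression description $d^p_{p,0}=\partial_*\circ\pi_*^{-1}$ and naturality of cap products, it transports the known value $lx_l$ of $d^2_{2,0}$ (Proposition~\ref{rho_0}) to $d^4_{4,0}$, with no geometric cycles and no use of $\tau_*$.

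Your proposed reduction to $l=1$ via an ``$l$-fold covering of the base loop'' is not specified. The natural candidate, the power map $\gamma\mapsto\gamma^l$, is equivariant only along the $l$-fold cover $S^1\to S^1$, which rescales $u_4$ by $l^2$. You would still need its effect on the torsion of $H_3$, and the $l=1$ value itself is asserted rather than computed.
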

    \begin{proof}
        Let $c\in H^2(\Omega_lM)$ be a generator. Using the identification $H^*(L_lM)\cong H^*(M)\otimes H^*(\Omega_lM)$ and the cohomology ring of $\Omega_lM$, define $1\otimes c\cap\colon H_*(L_lM)\ra H_{*-2}(L_lM)$. Define $\tilde{c}\in H^2(ES^1\times_{S^1}L_lM,L_lM)$ by $\delta^*c+\pi^*e$, where $\delta$ is the connecting morphism in the long exact sequence in cohomology for the pair $(ES^1\times_{S^1} L_lM, L_lM)$, and $e\in H_2(BS^1)$ is the Euler class. Then the diagram
        \[
            \begin{tikzcd}
                H_4(BS^1)\arrow[d,"e\cap"]\arrow[rd, phantom, "\circlearrowright"]&H_4(ES^1\times_{S^1}L_lM)\arrow[l,"\pi_*"']\arrow[r,"\partial_*"]\arrow[d,"\tilde{c}\cap"]\arrow[rd, phantom, "\circlearrowright"]&H_3(L_lM)\arrow[d,"c\cap"]\\
                H_2(BS^1)&H_2(ES^1\times_{S^1}L_lM)\arrow[l,"\pi_*"']\arrow[r,"\partial_*"]&H_1(L_lM)
            \end{tikzcd}
        \]
        commutes.
        By \cite[Proposition 5.14]{Hatcher}, the transgression $d_{p,0}^p$ corresponds to $\partial_*\circ\pi_*^{-1}\colon \mr{Im}\pi_*\ra H_{p-1}(L_lM)/\partial_*(\mr{Ker}\pi_*)$. Using naturality of the cap product, we obtain
        \begin{align*}
            c\cap d_4(u_4)=d_2(e\cap u_4)=d_2(u_2)=lx_l.
        \end{align*}
        Let $\omega$ be a generator of $H_2(\Omega_lM)$. Since $\iota_*^{-1}(x_l)\otimes\omega\in H_1(M)\otimes H_2(\Omega_lM)\cong H_3(L_lM)$ is a generator of $E_{0,3}^4$, $c\cap (\iota_*^{-1}(x_l)\otimes\omega)=\iota_*^{-1}(x_l)\otimes 1=x_l$ holds. Therefore $c\cap$ is an isomorphism. Therefore,
        \begin{align*}
          d_4(u_4)=(c\cap)^{-1}c\cap d_4(u_4)=(c\cap)^{-1}(lx_l)=l\iota_*^{-1}(x_l)\otimes\omega \in E_{0,3}^4.
        \end{align*}
        This proves the claim.
    \end{proof}
    \subsection{Computation of the string cobracket on $L(9;1)$ and $L(9;4)$}
    Let $K=\mr{Ker}(H_3(LM)\ra \bigoplus_{l\in\Z/9\Z}H_3(M))$. Using the formula in \ref{string_coproduct_calc}, we compute $\mr{pr}_1\circ\vee(j_*[\rho_{l,m}])\mod {\mr{pr}_1\circ\vee(K)}$ for $l\neq 0$. The resulting expressions, organized by $l$, are listed in Table \ref{string_coproduct_1} for $k=1$ and Table \ref{string_coproduct_4} for $k=4$.
    \begin{table}[htbp]
        \centering
        \begin{align*}
            \begin{array}{c|l}
                l&\mr{pr}_1\circ\vee(j_*[\rho_{l,m}])\\
                \hline
                1&0\\
                2&2t^{1}t_2^{1}\frac{dt}{t}\\
                3&2t^{1}t_2^{2}+2t^{2}t_2^{1}\frac{dt}{t}\\
                4&2t^{1}t_2^{3}+2t^{2}t_2^{2}+2t^{3}t_2^{1}\frac{dt}{t}\\
                5&2t^{1}t_2^{4}+2t^{2}t_2^{3}+2t^{3}t_2^{2}+2t^{4}t_2^{1}\frac{dt}{t}\\
                6&2t^{1}t_2^{5}+2t^{2}t_2^{4}+2t^{3}t_2^{3}+2t^{4}t_2^{2}+2t^{5}t_2^{1}\frac{dt}{t}\\
                7&2t^{1}t_2^{6}+2t^{2}t_2^{5}+2t^{3}t_2^{4}+2t^{4}t_2^{3}+2t^{5}t_2^{2}+2t^{6}t_2^{1}\frac{dt}{t}\\
                8&2t^{1}t_2^{7}+2t^{2}t_2^{6}+2t^{3}t_2^{5}+2t^{4}t_2^{4}+2t^{5}t_2^{3}+2t^{6}t_2^{2}+2t^{7}t_2^{1}\frac{dt}{t}
            \end{array}
        \end{align*}
        \caption{String coproduct for $k=1$}
        \label{string_coproduct_1}
    \end{table}
    \begin{table}[htbp]
        \centering
        
        \begin{align*}
            \begin{array}{c|l}
                l&\mr{pr}_1\circ\vee(j_*[\rho_{l,m}])\\
                \hline
                1&7t^{3}t_2^{7}+7t^{5}t_2^{5}+7t^{7}t_2^{3}\frac{dt}{t}\\
                2&8t^{1}t_2^{1}+7t^{3}t_2^{8}+7t^{4}t_2^{7}+7t^{5}t_2^{6}+7t^{6}t_2^{5}+7t^{7}t_2^{4}+7t^{8}t_2^{3}\frac{dt}{t}\\
                3&8t^{1}t_2^{2}+8t^{2}t_2^{1}+7t^{4}t_2^{8}+5t^{5}t_2^{7}+7t^{6}t_2^{6}+5t^{7}t_2^{5}+7t^{8}t_2^{4}\frac{dt}{t}\\
                4&6t^{1}t_2^{3}+8t^{2}t_2^{2}+6t^{3}t_2^{1}+5t^{5}t_2^{8}+5t^{6}t_2^{7}+5t^{7}t_2^{6}+5t^{8}t_2^{5}\frac{dt}{t}\\
                5&6t^{1}t_2^{4}+6t^{2}t_2^{3}+6t^{3}t_2^{2}+6t^{4}t_2^{1}+5t^{6}t_2^{8}+3t^{7}t_2^{7}+5t^{8}t_2^{6}\frac{dt}{t}\\
                6&4t^{1}t_2^{5}+6t^{2}t_2^{4}+4t^{3}t_2^{3}+6t^{4}t_2^{2}+4t^{5}t_2^{1}+3t^{7}t_2^{8}+3t^{8}t_2^{7}\frac{dt}{t}\\
                7&4t^{1}t_2^{6}+4t^{2}t_2^{5}+4t^{3}t_2^{4}+4t^{4}t_2^{3}+4t^{5}t_2^{2}+4t^{6}t_2^{1}+3t^{8}t_2^{8}\frac{dt}{t}\\
                8&2t^{1}t_2^{7}+4t^{2}t_2^{6}+2t^{3}t_2^{5}+4t^{4}t_2^{4}+2t^{5}t_2^{3}+4t^{6}t_2^{2}+2t^{7}t_2^{1}\frac{dt}{t}
            \end{array}
        \end{align*}
        \caption{String coproduct for $k=4$}
        \label{string_coproduct_4}
    \end{table}
    \newpage
    Next, using Proposition~\ref{pi_*}, we compute $\mr{pr}_1\circ\vee_{S^1}(j_*\pi_*y_l) \mod {\mr{pr}_1\circ\vee_{S^1}(K)}$ as \ref{string_cobracket_1} and \ref{string_cobracket_4}. Note that the string coproduct on $L_0M$ is the zero map by definition and the following diagram commutes
    \[
        \begin{tikzcd}
            H_2^{S^1}(LM)\arrow[r,"j_*"]\arrow[d,"\tau_*"]\arrow[rd, phantom, "\circlearrowright"]&H_2^{S^1}(LM,M)\arrow[d,"\tau_*"]\\
            H_3(LM)\arrow[r,"j_*"] &H_3(LM,M).
        \end{tikzcd}
    \]
    From the following long exact sequences of a pair
    \[
        \begin{tikzcd}
            H_2^{S^1}(M)\arrow[r]\arrow[d,"\cong"]\arrow[rd, phantom, "\circlearrowright"]& H_2^{S^1}(L_0M)\arrow[r]\arrow[d,"\cong"]& H_2^{S^1}(L_0M,M)\arrow[r]& H_1^{S^1}(M)\arrow[r,"\cong"]\arrow[d,"\cong"]\arrow[rd, phantom, "\circlearrowright"]&H_1^{S^1}(L_0M)\arrow[d,"\cong"]\\
            \Z \arrow[r]& \Z^2&& \Z/9\Z\arrow[r,"\cong"]&\Z/9\Z,
        \end{tikzcd}
    \]
    the generator of $H_2^{S^1}(LM,M)$ are $j_*\pi_*y_l$ and $j_*y_l'$ for $l\neq 0$ and $j_*y_0'$. Therefore, we consider only $\vee_{S^1}(j_*\pi_*y_l)$ and $\vee_{S^1}(j_*y_l')$.
    \newpage
    \begin{table}[htbp]
        \centering
        \begin{minipage}[t]{0.48\textwidth}
            \begin{align*}
                \begin{array}{c|l}
                    l & \mr{pr}_1\circ\vee_{S^1}(\pi_*y_l) \mod {\mr{pr}_1\circ\vee_{S^1}(K)}\\
                    \hline
                    1 & 0\\
                    2 & 0\\
                    3 & 0\\
                    4 & 2\alpha_3\otimes\beta_1\\
                    5 & 2\alpha_3\otimes\beta_2\\
                    6 & 2\alpha_3\otimes\beta_3\\
                    7 & 2\alpha_3\otimes\beta_4+2\alpha_6\otimes\beta_1\\
                    8 & 2\alpha_3\otimes\beta_5+2\alpha_6\otimes\beta_2
                \end{array}
            \end{align*}
            \caption{String cobracket for $k=1$}
            \label{string_cobracket_1}
            \centering
        \end{minipage}
        \begin{minipage}[t]{0.48\textwidth}
            \begin{align*}
                \begin{array}{c|l}
                    l & \mr{pr}_1\circ\vee_{S^1}(\pi_*y_l) \mod {\mr{pr}_1\circ\vee_{S^1}(K)}\\
                    \hline
                    1 & \alpha_3\otimes\beta_7\\
                    2 & \alpha_3\otimes\beta_8+\alpha_6\otimes\beta_5\\
                    3 & \alpha_6\otimes\beta_6\\
                    4 & 2\alpha_6\otimes\beta_7\\
                    5 & 2\alpha_6\otimes\beta_8\\
                    6 & \alpha_3\otimes\beta_3\\
                    7 & \alpha_3\otimes\beta_4+\alpha_6\otimes\beta_1\\
                    8 & 2\alpha_3\otimes\beta_5+\alpha_6\otimes\beta_2
                \end{array}
            \end{align*}
            \caption{String cobracket for $k=4$}
            \label{string_cobracket_4}
        \end{minipage}
    \end{table}
    From Tables \ref{string_cobracket_1} and \ref{string_cobracket_4}, the number of indices for which $\mr{pr}_1\circ\vee_{S^1}(\pi_*y_l) \mod {\mr{pr}_1\circ\vee_{S^1}(K)}$ is nonzero differs between $k=1$ and $k=4$.\\
    We now compute $\mr{pr}_1\circ\vee_{S^1}(K)$. For $l\neq 0,3,6$,
    \begin{align*}
        \mr{pr}_1\circ\vee_{S^1}(y_l')=r(n_l-1)l(\alpha_3\otimes\beta_{6-kl}+\alpha_6\otimes\beta_{9-kl})
    \end{align*}
    and for $l=0,3,6$, the image is zero. All indices are taken modulo $9$.
    \begin{table}[htbp]
        \begin{minipage}[t]{0.48\textwidth}
            \centering
            \begin{align*}
                \begin{array}{c|l}
                l & \mr{pr}_1\circ\vee_{S^1}(y_l')\\
                \hline
                1 & \alpha_3\otimes\beta_5+\alpha_6\otimes\beta_8\\
                2 & \alpha_3\otimes\beta_4+\alpha_6\otimes\beta_7\\
                3 & 0\\
                4 & \alpha_3\otimes\beta_2+\alpha_6\otimes\beta_5\\
                5 & \alpha_3\otimes\beta_1+\alpha_6\otimes\beta_4\\
                6 & 0\\
                7 & \alpha_3\otimes\beta_8+\alpha_6\otimes\beta_2\\
                8 & \alpha_3\otimes\beta_7+\alpha_6\otimes\beta_1
              \end{array}
            \end{align*}
            \caption{String cobracket of $K$ for $k=1$}
            \end{minipage}
        \begin{minipage}[t]{0.48\textwidth}
            \centering
            \begin{align*}
                \begin{array}{c|l}
                    l & \mr{pr}_1\circ\vee_{S^1}(y_l')\\
                    \hline
                    1 & \alpha_3\otimes\beta_2+\alpha_6\otimes\beta_5\\
                    2 & \alpha_3\otimes\beta_7+\alpha_6\otimes\beta_1\\
                    3 & 0\\
                    4 & \alpha_3\otimes\beta_8+\alpha_6\otimes\beta_2\\
                    5 & \alpha_3\otimes\beta_4+\alpha_6\otimes\beta_7\\
                    6 & 0\\
                    7 & \alpha_3\otimes\beta_5+\alpha_6\otimes\beta_8\\
                    8 & \alpha_3\otimes\beta_1+\alpha_6\otimes\beta_4
                \end{array}
            \end{align*}
            \caption{String cobracket of $K$ for $k=4$}
        \end{minipage}
    \end{table}
    Consequently, one can see that the string cobrackets for $k=1,4$ differ. We summarize our findings in the following.
    \begin{theorem}
        The string cobracket is not a homotopy invariant. More precisely, the number of components on which the string cobracket is nonzero differs between $L(9;1)$ and $L(9;4)$.
    \end{theorem}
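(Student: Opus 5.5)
The plan is to assemble the computations of Section~\ref{section3} into an invariant that can be compared between the two manifolds. The invariant is the set of components $l\in\Z/9\Z$ for which the restriction of $\mr{pr}_1\circ\vee_{S^1}$ to $H_2^{S^1}(L_lM,M)$ is nonzero.

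First, I fix what this count means and why it is natural. Any homotopy equivalence $f\colon L(9;1)\to L(9;4)$ induces an isomorphism on $\pi_1\cong\Z/9\Z$, hence a bijection of the components $L_lM$. The relevant groups $H_2^{S^1}(L_lM,M)$, $H_1^{S^1}(L_lM)\cong\Z/3\Z$ for $l=3,6$, and $H_0^{S^1}$ are abstractly isomorphic, compatibly with that bijection. So if the cobracket were a homotopy invariant, in the sense of commuting with the maps induced by $f$ on $S^1$-equivariant homology, then the number of components on which $\vee_{S^1}$ is nonzero would agree. I use $\mr{pr}_1$ rather than the full target because a nonzero $\mr{pr}_1$-component forces $\vee_{S^1}\neq 0$. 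Conversely, I argue vanishing via the full symmetric computation; the $\mr{pr}_2$ part is obtained from $\mr{pr}_1$ by the antisymmetry of the cobracket. It therefore suffices to count components on which $\mr{pr}_1\circ\vee_{S^1}$ is nonzero.

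Second, I use the generators. By the exact sequence of the pair just preceding the tables, $H_2^{S^1}(L_lM,M)$ is generated by $j_*\pi_*y_l$ and $j_*y_l'$ for $l\neq0$, and $\vee_{S^1}$ vanishes on $L_0M$. Thus on component $l$ the cobracket is nonzero if and only if one of $\mr{pr}_1\circ\vee_{S^1}(\pi_*y_l)$ or $\mr{pr}_1\circ\vee_{S^1}(y_l')$ is nonzero. Here I do \emph{not} reduce modulo $\mr{pr}_1\circ\vee_{S^1}(K)$. Instead I compute the actual value. The class $\pi_*y_l$ is determined up to the choice of $m$ (equivalently up to adding elements of $K$), and the class $y_l'$ satisfies $\tau_*(y_l')=[\rho_{l,1+n_ll}]-[\rho_{l,1+l}]$. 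Both are evaluated using formula~(\ref{string_coproduct_calc}) and the description of $\pi_*$ in degree~$1$ from Subsection~\ref{pi_*}. Under that description, only terms $t^pt_2^q$ with $p\in\{3,6\}$ survive, with coefficients reduced mod $3$.

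Third, I read off the tables. For $k=1$, $l=1$: $\mr{pr}_1\circ\vee_{S^1}(y_1')=\alpha_3\otimes\beta_5+\alpha_6\otimes\beta_8\neq0$. For $k=1$ the nonzero $y_l'$-values occur at $l\neq 0,3,6$. For $l=3,6$, Table~\ref{string_cobracket_1} gives respectively $0$ and $2\alpha_3\otimes\beta_3$, modulo the $K$-image, which itself is zero there. So I obtain a precise count of components for each $k$. For $k=4$, Table~\ref{string_cobracket_4} gives $\alpha_6\otimes\beta_6\neq0$ at $l=3$, whereas for $k=1$ the value at $l=3$ is $0$ and the $K$-contribution at $l=3$ is also $0$. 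Hence $\vee_{S^1}$ vanishes on $L_3M$ for $k=1$. One still has to check that $\mr{pr}_2$ also vanishes there; this follows by the same formula with the roles of $t,t_2$ exchanged. For $k=4$ it does not vanish. Counting components then gives different numbers, namely $8$ versus $7$ or similar according to the tables.

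The main obstacle is the invariance step. A homotopy equivalence of lens spaces induces an automorphism of $\Z/9\Z$ (multiplication by a unit), which permutes components. The count of nonvanishing components is insensitive to such a permutation, and to the automorphisms of the cyclic groups $\Z/3\Z$ and $\Z$. So the count is a genuine homotopy invariant of the structure $(H^{S^1}_*(LM,M),\vee_{S^1})$. This would need to be stated carefully. The delicate computational point is verifying that on $L_3M$ for $k=1$ both $\pi_*y_3$ and $y_3'$ really have zero cobracket for \emph{every} choice of $m$, not just modulo $K$. I would handle this by checking directly from~(\ref{string_coproduct_calc}) that no term $t^3t_2^q$ or $t^6t_2^q$ has coefficient prime to $3$ when $k=r=1$ and $l=3$.
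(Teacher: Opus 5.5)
\textbf{There is a genuine gap, and it sits in the step you flagged as delicate.} Your invariant is whether $\vee_{S^1}$ is nonzero on each component. By your own reading, $\vee_{S^1}(y_l')\neq 0$ for every $l\notin\{0,3,6\}$ and for both $k=1,4$. So your count can differ between the two lens spaces only through $l=3,6$, and on those components the value is not determined by the data you use.

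For $l=3,6$, the map $\pi_*$ sends $H_3(L_lM)$ onto $E^5_{0,3}\cong\Z/3\Z$ with kernel $\Z\,\tau_*\pi_*y_l+3K$. Hence $\mr{Im}\,\tau_*=\Z\,\tau_*\pi_*y_l+3K$, and one may take $\tau_*y_l'\in 3K$, whose cobracket vanishes mod $3$. The cobracket on the whole component is therefore the single element $\pi_*\otimes\pi_*\,\mr{pr}_1\vee(\tau_*\pi_*y_l)$.

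The integer $m$ with $\tau_*\pi_*y_l=[\rho_{l,m}]$ is not a choice. The class $\pi_*y_l$ is fixed, $m$ is whatever $\tau_*\pi_*y_l$ is, and the paper never identifies it. The value genuinely depends on $m$ mod $3$:
\begin{itemize}
\item For $k=r=1$, $l=3$, the second sum in (\ref{string_coproduct_calc}) runs over $1\le i\le 2+9m$. It contains $t^6t_2^6\,dt/t$ exactly $m$ times (for $i=6,15,\dots$), while the $t^3$ term has $t_2^0$ and dies. So $\mr{pr}_1\circ\vee_{S^1}(\pi_*y_3)=m\,\alpha_6\otimes\beta_6$.
\item For $k=4$ ($r=7$), the same count gives $(m+1)\,\alpha_6\otimes\beta_6$. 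The component $l=6$ behaves similarly.
\end{itemize}
So your proposed check, that no $t^3t_2^q$ or $t^6t_2^q$ term has coefficient prime to $3$ for every $m$, is false. The tabulated $0$ and $\alpha_6\otimes\beta_6$ at $l=3$ are just the $m=0$ representatives.

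Your phrase ``the $K$-image, which itself is zero there'' conflates two different things. One is $\vee_{S^1}(y_3')$, which is indeed zero. The other is $\pi_*\otimes\pi_*\,\mr{pr}_1\vee(K)$, which is nonzero at $l=3$. Reducing modulo $\vee(K)$ there discards exactly the information you need. You also say you will not reduce modulo $K$, yet you then read off tables that are so reduced. Without determining which $[\rho_{l,m}]$ equals $\tau_*\pi_*y_l$ for $l=3,6$ on both lens spaces, the count $7$ versus $8$ is not established.

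\textbf{How the paper's argument differs.} The paper records separately, per component, whether $\vee_{S^1}(\pi_*y_l)$ is nonzero modulo $\vee_{S^1}(y_l')$ and whether $\vee_{S^1}(y_l')\neq 0$, which gives $11$ versus $14$. Its discrepancy is already visible on components $l\notin\{0,3,6\}$. There $\tau_*$ is onto $H_3(L_lM)$, so changing $m$ only changes $\vee_{S^1}(\pi_*y_l)$ by a multiple of $\vee_{S^1}(y_l')$, and the reduction modulo $K$ is harmless. Your nonzero-or-not invariant throws away precisely this information.

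\textbf{How to repair it.} Count an $m$-independent quantity instead: the $\Z/3\Z$-dimension of the image of $\mr{pr}_1\circ\vee_{S^1}$ on $H_2^{S^1}(L_lM,M)$ for $l\notin\{0,3,6\}$. This dimension is $2$ exactly when $\vee_{S^1}(\pi_*y_l)\not\equiv 0$ modulo the $y_l'$-direction. Your invariance argument, via the bijection of components and the induced isomorphisms, applies verbatim to it.
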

    \begin{proof}
        For $k=1$, the components on which $\mr{pr}_1\circ\vee_{S^1}(\pi_*y_l)$ is nonzero are $l=4,5,6,7,8$ and those on which  $\mr{pr}_1\circ\vee_{S^1}(y_l')$ is nonzero are $l=1,2,4,5,7,8$. These lie in distinct components, so the total number of nonzero components is $11$.\\
        For $k=4$, the components on which $\mr{pr}_1\circ\vee_{S^1}(\pi_*y_l)$ is nonzero are $l=1,2,3,4,5,6,7,8$ and those on which $\mr{pr}_1\circ\vee_{S^1}(y_l')$ is nonzero are $l=1,2,4,5,7,8$. Again, these lie in distinct components, giving a total of $14$.\\
        Since the string cobracket is coassociative, composing with the projection $\mathrm{pr}_1$ does not change which components are nonzero. Therefore, the number of nonzero components of the string cobracket is $11$ for $k=1$ and $14$ for $k=4$.
    \end{proof}
    \begin{remark}
        There exist pairs of lens spaces pairs for which the numbers of components in which the string coproduct are nonzero agree, while those of the string cobracket differ. For example, for lens spaces $L(21;2)$ and $L(21;8)$, the number of nonzero components for the string coproduct is $20$ in both cases, while the number of nonzero components for the string cobracket is $19$ and $20$, respectively. Thus, counting nonzero components is insufficient to capture the full algebraic structure of either operation. Note that a direct calculation for each natural number shows that $21$ is the smallest number for which such a pair exists.
    \end{remark}
    
    \section{Relationship between string cobracket and Whitehead torsion}\label{section4}
    We now investigate how the string cobracket transforms under homotopy equivalence and how this behavior reflects Whitehead torsion. Fix a homotopy equivalence $f\colon L(9;1)\ra L(9;4)$ that sends the $l$-component of $L(9;1)$ to the $2l$-component of $L(9;4)$. As in \cite[Section 4.2]{Naef24}, we have the transformation formula for the string coproduct
    \begin{align*}
        \vee\circ f_*(x)=f_*\circ \vee(x)+f_*(x\wedge d\log \tau(f))
    \end{align*}
    where $\wedge$ is the string product, $\tau(f)$ is the Whitehead torsion of $f$, and $d\log \tau(f)$ is the image of the Dennis trace map of $f$. Using the commutativity of $f$ and $\tau_*$ and $\pi_*$, we obtain the transformation formula for the string cobracket
    \begin{align*}
        \vee_{S^1}\circ f_*(x)=f_*\circ \vee_{S^1}(x)+\pi_*\circ f_*(\tau_*(x)\wedge d\log\tau(f)).
    \end{align*}
    We now compute 
    \begin{align*}
        \pi_* f(\tau_*(\pi_*y_l)\wedge d\log\tau(f)), \quad \pi_*y_l\in H_2^{S^1}(L_l(L(9;1))).
    \end{align*}
    Since
    \begin{align*}
        \tau(f)&=\frac{(t^7-1)(t-1)}{(t-1)(t-1)}= 1+t+\cdots +t^6\\
        \tau(f)^{-1}&=\frac{t-1}{t^7-1}=\frac{t^{9\cdot 3+1}-1}{t^7-1}=1+t^3+t^5+t^7,
    \end{align*}
    we obtain
    \begin{align*}
        d\log\tau(f)&=(1+t^3+t^5+t^7)(1+2t+3t^2+\cdots +6t^5)\\
        \equiv &2t^2+2t^3+t^4+t^5dt \in \Z/3\Z[t]/(t^9-1)dt.
    \end{align*}
    Since the operations on the coefficients are invariant, as well as $\pi_*$ and $f_*$, we may first work modulo $3$. After homogenizing, we compute $\pi_*\circ f_*(\tau_*(\pi_*y_l)\wedge d\log\tau(f))$. Here, “homogenizing’’ means rewriting the expression in terms of the homogeneous basis $t^it_2^j\frac{dt}{t}$ (with $i,j$ taken modulo $9$), so that each term lies in a well-defined component $H_1(L_iM)\otimes H_0(L_jM)$, and the projection $\pi _*$ can be applied. (We use the results of \cite[Appendix A]{Naef24}.)
    \begin{align*}
        \pi_*\circ f_*(\tau_*(\pi_*y_l)\wedge d\log\tau(f))&= \pi_*\circ f_*((t^{l}-t_2^{l})(2t^2t_2^6+2t^3t_2^5+t^4t_2^4+t^5t_2^3)dt)\\
        &=\pi_*\left((t^{2l}-t_2^{2l})(2t^4t_2^{12}+2t^6t_2^{10}+t^8t_2^8+t^{10}t_2^6)2t^2 \frac{dt}{t}\right).
    \end{align*}
    For example, when $l=1$ we obtain
    \begin{align*}
        \pi_* f(\tau_*(\pi_*y_l)\wedge d\log\tau(f))&=2\alpha_3\otimes\beta_8-2\alpha_3\otimes\beta_8-2\alpha_6\otimes \beta_{14}+\alpha_{12}\otimes\beta_8\\
        &=\alpha_6\otimes \beta_5+\alpha_3\otimes\beta_8.
    \end{align*}
    This agrees with the computation as Section \ref{section3}. This confirms that the string cobracket, like the string coproduct, detects Whitehead torsion via the Dennis trace map.\\
    Finally, since the string coproduct is a simple-homotopy invariant \cite{NaefSafronov}, and the maps $\tau_*$ and $\pi_*$ are natural, it follows that the string cobracket is also invariant under simple–homotopy equivalence.

    \section{Failure of Lie bialgebra structure}\label{section5}
    We now show that the pair consisting of the string bracket and the string cobracket does not satisfy the compatibility condition of a Lie bialgebra in general. Recall that a Lie bialgebra consists of a Lie algebra structure and a Lie coalgebra structure on a vector space satisfying the following compatibility condition in the sense of Drinfeld:
        \begin{align*}
            \vee_{S^1}\circ\wedge_{S^1}(X,Y)=(\mr{ad}_X\otimes 1+1\otimes \mr{ad}_X)\vee_{S^1}(Y)-(-1)^{|X||Y|}(\mr{ad}_Y\otimes 1+1\otimes \mr{ad}_Y)\vee_{S^1}(X)
        \end{align*}
        for all $X,Y$. Here, $\mr{ad}_X Y=\wedge_{S^1}(X,Y)$. We show that this compatibility fails, in general.
    \begin{theorem}
        There exists an oriented compact finite-dimensional manifold $M$ such that $(H_*^{S^1}(LM,M),\wedge_{S^1},\vee_{S^1})$ does not form a Lie bialgebra. 
    \end{theorem}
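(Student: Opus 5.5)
We take $M=L(9;k)$ with $k=1$ (or $k=4$) and exhibit explicit classes $X,Y\in H_*^{S^1}(LM,M)$ for which the Drinfeld compatibility identity fails, using only the low-degree computations of Section \ref{section3}. Degrees are chosen so that every term of the identity lands in $H_1^{S^1}\otimes H_0^{S^1}\oplus H_0^{S^1}\otimes H_1^{S^1}$, where Section \ref{section3} already provides complete control. Since $d=3$, the bracket has degree $2-d=-1$ and the cobracket has degree $2-d=-1$ on total degree. To have $\vee_{S^1}\circ\wedge_{S^1}(X,Y)$ land in total degree $1$, we need $|X|+|Y|=3$. We therefore take $X\in H_0^{S^1}(L_aM)$, the class $\beta_a$ of a nontrivial component (or its image in the relative group), and $Y=\pi_*y_l\in H_2^{S^1}(L_lM)$ or $Y=y_l'$.

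First we compute the left-hand side. The string bracket $\wedge_{S^1}(X,Y)=\pi_*(\tau_*X\wedge\tau_*Y)$ equals $\pi_*(\rho_*([S^1]\otimes 1_a)\wedge\tau_*Y)$. By Proposition \ref{rho_0}, $\tau_*\beta_a$ is $a\,x_a\in H_1(L_aM)$, and $\tau_*\pi_*y_l=[\rho_{l,m}]$ by the computation of $\tau_*$. The string product of a degree-one class with $[\rho_{l,m}]$ lands in $H_1(L_{a+l}M)$, and I expect it to be computed by intersecting at the base point. This should reduce to the product in $H_*(LM)$ of $x_a$ with the class of $M$ (the degree-three part of $[\rho_{l,m}]$ mapping to $[M]$, as in Proposition \ref{rho_2}). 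The result is $a\,x_{a+l}$ up to sign, and after $\pi_*$ it becomes $a\,\alpha_{a+l}$ modulo $3$. This is nonzero only if $a+l\in\{3,6\}$ and $3\nmid a$. The bracket then lands in $H_2^{S^1}$ at best through torsion, and the cobracket of a degree-one class in $H_1^{S^1}$ lands in degree $0$, where it vanishes since the string coproduct has nothing in negative degree. The degrees here should be rechecked; if the left-hand side is not forced to vanish this way, we instead choose $(a,l)$ with $a+l\equiv 0$, so that the bracket lies in $L_0M$ and is killed in $H_*^{S^1}(LM,M)$, or vanishes because the coproduct is zero on $L_0M$. In either case the left-hand side is $0$.

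Next we compute the right-hand side, $(\mathrm{ad}_X\otimes1+1\otimes\mathrm{ad}_X)\vee_{S^1}(Y)-\pm(\mathrm{ad}_Y\otimes1+1\otimes\mathrm{ad}_Y)\vee_{S^1}(X)$. Here $\vee_{S^1}(X)=0$ for degree reasons, since $X$ has degree $0$. For the first term we read $\vee_{S^1}(Y)$ from Tables \ref{string_cobracket_1}, \ref{string_cobracket_4}, and the tables for $K$; for example, $\mathrm{pr}_1\vee_{S^1}(\pi_*y_4)=2\alpha_3\otimes\beta_1$ for $k=1$. Then $\mathrm{ad}_X$ acts on the $H_0^{S^1}$ tensor factor by $\beta_b\mapsto \pi_*(a x_a\wedge 1_b)=a\,\alpha_{a+b}$, and on the $\alpha$ factor it lands in $H_1^{S^1}\wedge H_0$-type terms of degree $0$, which vanish. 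The result is an explicit element of $H_1^{S^1}\otimes H_1^{S^1}$ and of $\mathrm{Tor}$ terms, which are $\Z/3\otimes\Z/3\cong\Z/3$. Choosing $a$ with $3\nmid a$ and $a+1\in\{3,6\}$ (so $a=2$ or $5$) gives the nonzero term $2a\,\alpha_3\otimes\alpha_{a+1}$. We compare this with the zero left-hand side; since $\alpha_3\otimes\alpha_6$ generates a $\Z/3$ summand, the identity fails.

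The main obstacle is bookkeeping: getting the degree shifts of the relative groups, the Künneth and Tor summands, and the signs right, so that both sides genuinely live in the same component and the left-hand side provably vanishes. I would first fix the component indices $(a,l)$ so that the left side lands in $L_0M$, which kills it cleanly. I would then verify the right side using only Proposition \ref{rho_0} and the mod-$3$ description of $\pi_*$ from Section \ref{pi_*}.
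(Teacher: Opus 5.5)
There is a genuine gap, and it sits in the degree bookkeeping you flagged yourself: for your choice of classes the compatibility identity holds trivially. You note that $|X|+|Y|=3$ is needed for the left side to land in total degree $1$, but you then take $|X|=0$, $|Y|=2$. Every term of the identity therefore lies in total degree $|X|+|Y|-2=0$, and in that degree the right-hand side is identically zero.

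To see this, recall that $\vee_{S^1}(X)=0$, as you say. The operator $\mathrm{ad}_X=\wedge_{S^1}(\beta_a,-)$ has degree $|X|-1=-1$. On an $H_0^{S^1}$ tensor factor it lands in $H_{-1}^{S^1}=0$. On an $H_1^{S^1}$ factor it gives $\pi_*(a\,x_a\wedge\tau_*\alpha_p)=0$, because $\alpha_p$ is torsion while $\tau_*\alpha_p\in H_2(L_pM)\cong\Z$.

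The left side is $0$ as well. The reason is that $\wedge_{S^1}(X,Y)$ lies in the torsion group $H_1^{S^1}$ and $\tau_*$ maps it into $H_2\cong\Z$; it is not a negative-degree argument, since the coproduct lands in $H_0$. So you get $0=0$.

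Your nonzero term $2a\,\alpha_3\otimes\alpha_{a+1}$ lies in $H_1^{S^1}\otimes H_1^{S^1}$, i.e.\ in total degree $2$, so it cannot occur. It comes from the formula $\mathrm{ad}_{\beta_a}(\beta_b)=\pi_*(a\,x_a\wedge 1_b)=a\,\alpha_{a+b}$, which is wrong on two counts:
\begin{itemize}
\item the bracket is $\pi_*(\tau_*\beta_a\wedge\tau_*\beta_b)$, and $\tau_*\beta_b=b\,x_b$, not $1_b$;
\item the string product has degree $-3$, so $x_a\wedge 1_b\in H_{-2}=0$.
\end{itemize}
Taking $X$ of degree one does not rescue the argument either: $\tau_*\alpha_p=0$ forces both $\mathrm{ad}_X=0$ and $\vee_{S^1}(X)=0$.

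The missing idea is that $\mathrm{ad}_X$ must \emph{raise} degree, so that it turns the $\beta$'s occurring in $\vee_{S^1}(Y)$ into $\alpha$'s. This forces $|X|=|Y|=2$. The paper takes $M=L(9;4)$, $X=\pi_*y_1$, $Y=\pi_*y_8$.
\begin{itemize}
\item The left side vanishes because $\wedge_{S^1}(X,Y)$ lies in the torsion group $H_3^{S^1}$, and $\tau_*$ sends it to $H_4\cong\Z$.
\item On the right, $\mathrm{ad}_{\pi_*y_a}(\beta_b)=\pi_*(\tau_*\pi_*y_a\wedge b\,x_b)$, which is $b\,\alpha_{a+b}$ modulo $3$ when $a+b\equiv 3,6 \pmod 9$. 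This is the correct version of your formula, with the coefficient coming from $\tau_*\beta_b$ rather than from $X$.
\item Inserting $\mathrm{pr}_1\vee_{S^1}(\pi_*y_8)=\alpha_3\otimes\beta_1+\alpha_6\otimes\beta_4$ and $\mathrm{pr}_1\vee_{S^1}(\pi_*y_1)=\alpha_3\otimes\beta_7$ from Table \ref{string_cobracket_4} (together with their antisymmetrized counterparts), the right side reduces to $-\alpha_3\otimes\alpha_6+\alpha_6\otimes\alpha_3\neq 0$, in the correct total degree $2$.
\end{itemize}
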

    \begin{proof}
        Take $M=L(9;4)$, and set $X=\pi_*y_1$, $Y=\pi_*y_8$. Since 
        \[
            \begin{tikzcd}
                H_3^{S^1}(L_lM)\arrow[r,"\tau_*"]\arrow[d,"\cong"]\arrow[rd, phantom, "\circlearrowright"]& H_4(L_lM)\arrow[d,"\cong"]\\
                (\Z/9\Z)/\mr{Im}d_{4,0}^4 \arrow[r] & \Z 
            \end{tikzcd}
        \]
        is the zero map, the left-hand side is
        \begin{align*}
            \vee_{S^1}\circ\wedge_{S^1}(X,Y)=\pi_*\circ\vee\circ\tau_*\circ \wedge_{S^1}(X,Y)=0
        \end{align*}
        We consider the right-hand side. Using Table \ref{string_cobracket_4}, we obtain
        \begin{align*}
            \mr{pr}_1\circ\vee_{S^1}(\pi_*(y_8))=\alpha_3\otimes\beta_1+\alpha_6\otimes\beta_4.
        \end{align*}
       The first term of the above formula is 
        \begin{align*}
            (\mr{ad}_X\otimes 1+1\otimes \mr{ad}_X)(\alpha_3\otimes\beta_1)=\wedge_{S^1}(\pi_*y_1,\alpha_3)\otimes\beta_1+\alpha_3\otimes\wedge_{S^1}(\pi_*y_1,\beta_1).
        \end{align*}
        Since $\alpha_3$ is torsion, while $\tau_*\alpha_3\in H_2(L_3M)\cong\Z$ is satisfied, thus the map $\tau_*$ must be zero on $\alpha_3$, hence $\tau_*\alpha_3=0$. Moreover, since $\tau_*\beta_1=1t^1dt/t$, using \cite[Appendix A]{Naef24} again, we obtain
        \begin{align*}
            \alpha_3\otimes\wedge_{S^1}(\pi_*y_1,\beta_1)&=\pi_*\wedge([\rho_{1,m}],tdt/t)\\
            &=\alpha_3\otimes (\pi_*t^2dt/t)\\
            &=0.
        \end{align*}
        Since $H_1^{S^1}(L_lM)=0$ for $l\neq 3,6$, the remaining terms are computed similarly. The right-hand side of the compatibility condition becomes
        \begin{align*}
            &(\mr{ad}_X\otimes 1+1\otimes \mr{ad}_X)(\alpha_3\otimes\beta_1+\alpha_6\otimes\beta_4-\beta_1\otimes\alpha_3-\beta_4\otimes\alpha_6)\\
            &-(\mr{ad}_Y\otimes 1+1\otimes \mr{ad}_Y)(\alpha_3\otimes\beta_7-\beta_7\otimes\alpha_3)\\
            &=\alpha_3\otimes\pi_*(t^2dt/t)+4\alpha_6\otimes\pi_*(t^5dt/t)-\pi_*(t^2dt/t)\otimes\alpha_3-4\pi_*(t^5dt/t)\otimes\alpha_6\\
            &-7\alpha_3\otimes\pi_*(t^{15}dt/t)+7\pi_*(t^{15}dt/t)\otimes\alpha_3\\
            &=-\alpha_3\otimes\alpha_6+\alpha_6\otimes\alpha_3.
        \end{align*}
        Note that the degree of $X$ and $Y$ is one, and the sign of the second term is negative. Since the left-hand side and right-hand side differ, the compatibility condition fails. Therefore, the triple $(H_*^{S^1}(LL(9;4),L(9;4)),\wedge_{S^1},\vee_{S^1})$ does not form a Lie bialgebra.
    \end{proof}
    \bibliographystyle{amsalpha}
    \bibliography{Isana_Sumoto_1_rev}
\end{document}